\documentclass{article}
\usepackage{amsmath}
\usepackage{amsfonts}
\usepackage[margin=1in]{geometry}
\usepackage{graphicx}
\usepackage{multirow}
\usepackage{multicol}
\usepackage{booktabs}
\usepackage{xcolor}
\usepackage{indentfirst}
\usepackage{amsthm}
\usepackage{setspace}
\usepackage{bm}
\usepackage[notquote]{hanging}
\usepackage{amsthm}
\usepackage{rotating}
\usepackage{url}
\newcommand\independent{\protect\mathpalette{\protect\independenT}{\perp}}
\def\independenT#1#2{\mathrel{\rlap{$#1#2$}\mkern2mu{#1#2}}}

\newtheorem{theorem}{Theorem}
\newtheorem{lemma}[theorem]{Lemma}

\allowdisplaybreaks

\begin{document}

\title{Inferring the minimum spanning tree from a sample network}
\author{Jonathan Larson and Jukka-Pekka Onnela}
\date{\today}

\maketitle

\doublespacing

\begin{abstract}
Minimum spanning trees (MSTs) are used in a variety of fields,
from computer science to geography.
Infectious disease researchers have used them
to infer the transmission pathway of certain pathogens.
However, these are often the MSTs of sample networks,
not population networks,
and surprisingly little is known about what can be inferred about a population
MST from a sample MST.
We prove that if $n$ nodes (the sample) are selected uniformly at random from
a complete graph with $N$ nodes and
unique edge weights (the population),
the probability that an edge is in the population graph's MST
given that it is in the sample graph's MST is $\frac{n}{N}$.
We use simulation to investigate this conditional probability
for $G(N,p)$ graphs, Barab\'{a}si–Albert (BA) graphs,
graphs whose nodes are distributed in $\mathbb{R}^2$
according to a bivariate standard normal distribution,
and an empirical HIV genetic distance network.
Broadly, results for the complete, $G(N,p)$, and normal graphs
are similar,
and results for the BA and empirical HIV graphs are similar.
We recommend that researchers use an edge-weighted random walk
to sample nodes from the population
so that they maximize the probability that an edge is
in the population MST given that it is in the sample MST.

\textbf{Keywords:} minimum spanning tree, MST, inference, sampling
\end{abstract}

\section{Introduction}
\label{sec:intro}

A graph consists of nodes (also called vertices) and edges,
with each edge connecting a pair of nodes.
A tree is a subset of the edges of a connected graph
that has no cycles,
i.e., there is only one path from one node to any other node.
A spanning tree connects all the vertices of the graph,
and the minimum spanning tree (MST)
is the spanning tree with the lowest total edge weight. 
If the original graph is not connected,
its minimum spanning forest (MSF) consists of the MSTs
of its connected components.
If the edge weights are unique, there is only one MST;
if there are duplicate edge weights,
there may be more than one MST.
Given a weighted graph,
there are a variety of algorithms for obtaining its MST.
The classic algorithms are
Bor\r{u}vka's \cite{nesetril2001},
Prim's \cite{prim1957},
and Kruskal's \cite{kruskal1956}.

Researchers in a variety of fields have used MSTs
to analyze network data.
For example,
neurologists have used MSTs to compare brain networks,
in which regions of the brain are nodes and edges denote
structural or functional connections
\cite{tewarie2015,vandellen2018}.
Computer scientists have used MSTs to segment video
into meaningful partitions \cite{wang2020}
and decompose images into a base layer and a detail layer
\cite{jin2020}. 
Geographers have used MSTs to describe local building patterns
\cite{wu2018}.
The use of MSTs to study the hierarchical structure of financial
markets using correlation-based networks was first proposed
in \cite{mantegna1999}; 
some of these concepts were later expanded in a series of papers
\cite{onnela2002,onnela2003} that included the application of
MST to a subset of 116 stocks of the 500 stocks in the S\&P 500 index.
In 2020,
PNAS published at least six articles in which researchers
used an MST
\cite{li2020,steinbrenner2020,manning2020,saul2020,matsumura2020,hahn2020}.

The MSTs we construct are usually only the MSTs of sample networks,
whereas our interest lies in characterizing the MST of the population.
Despite the importance of the problem,
we are not aware of any published work on
what may be inferred about the MST of the population
network from the MST of a sample network.
(We should clarify that when we say sampling,
we mean the sampling of nodes.)
Instead, research has focused on other issues,
such as using knowledge of the population network
to predict the behavior of trees that span sample subgraphs
\cite{bertsimas1990};
using knowledge of the population network and sampling process
to find a set of edges that contains the sample MST with high
probability \cite{goemans2006}; or
finding the MST when edge weights are random
\cite{torkestani2012}.

This paper aims to answer the following related questions:
\begin{enumerate}
\item Given that an edge is in the sample graph
but not the sample MST,
what is the probability that it is not in the population MST?
We can think of this probability as the negative predictive value (NPV).
\item Given that an edge appears in the sample MST,
what is the probability that it appears in the population MST?
We can think of this probability as the positive predictive value (PPV).
\item How well can we estimate these probabilities by bootstrapping
from the sample graph?
\item How strong is the relationship between the number of bootstrap MSTs
that an edge is in and whether or not that edge is in the population MST?
\end{enumerate}

\section{Theory}
\label{sec:theory}

A graph $G = (V,E)$ consists of a set of nodes (or vertices) $V$
and a set of edges $E$;
each edge $e\in E$ connects a pair of nodes $u,v\in V$,
$u \ne v$,
so that we may write $e = (u,v)$.
Here, we assume all graphs are undirected,
so $(u,v) = (v,u)$.
Each edge $e\in E$ has a weight $w(e) > 0$.
A cycle $C$ is a set of edges $\{e_1,\dotsc,e_k\}\subset E$
such that $\forall\, i\in\{1,\dotsc,k-1\}$,
$e_i = (v_i,v_{i+1})$,
where $i \ne j \implies v_i \ne v_j$,
and $e_k = (v_k,v_1)$.
In some texts a cycle also contains the associated vertices,
but here, for ease of exposition,
the terms ``cycle'', ``tree'', ``spanning tree'',
``MST'', and ``cut'' all refer to sets of edges.
A tree $T$ is a subset of the edges of a connected graph
that contains no cycles;
$T$ is spanning if $\forall\, v\in V$, $\exists\, e\in T$
such that $v$ is an endpoint of $e$;
and a minimum spanning tree (MST) has the lowest total edge weight
of all spanning trees.
If the original graph is not connected,
its minimum spanning forest (MSF) consists of the MSTs
of its connected components.
If $V_1$ and $V_2$ are non-empty sets of nodes satisfying $V_1\cup V_2 = V$
and $V_1\cap V_2 = \emptyset$,
then the associated cut $D$ consists of edges connecting
one node from $V_1$ and one node from $V_2$.
In symbols,
$D = \{(v_1,v_2)\in E : v_1\in V_2,\ v_2\in V_2\}$.

The symbol $|E|$ denotes the number of elements in the set $E$
and $A\triangle B = (A\setminus B)\cup(B\setminus A)$.
Theorems \ref{theorem:st} and \ref{theorem:unique}
are known facts but are proved here for completeness.
Theorem \ref{theorem:excluded} is a version of the cycle property
and Theorem \ref{theorem:included} is a version of the cut property.
Theorem \ref{theorem:bijective} is based on \cite{raphael2019}.

\begin{lemma}
\label{lemma:degree}
If $G$ is a connected graph with $N \in \{2,3,4,\dotsc\}$ nodes
and $T$ is a spanning tree of $G$,
then at least one node in $G$ is the endpoint of only one edge in $T$.
\end{lemma}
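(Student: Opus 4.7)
The plan is to use a longest-path argument, which avoids any prior knowledge about $|T|$ and works directly from the definitions given.

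First I would let $P = (v_1, v_2, \ldots, v_k)$ denote a longest simple path in $T$, where by ``longest'' I mean one using the maximum possible number of edges. Such a $P$ exists because $T$ is finite, and $k \geq 2$ because $T$ is spanning over $N \geq 2$ nodes and so contains at least one edge. I claim $v_1$ is the endpoint of exactly one edge of $T$. By construction $v_1$ is an endpoint of $(v_1,v_2) \in T$, so I need to rule out a second incident edge.

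Suppose, for contradiction, that $v_1$ is an endpoint of some edge $e = (v_1, u) \in T$ with $e \neq (v_1, v_2)$. I would split into two cases. If $u \notin \{v_2, \ldots, v_k\}$, then $(u, v_1, v_2, \ldots, v_k)$ is a simple path in $T$ strictly longer than $P$, contradicting maximality. If instead $u = v_j$ for some $j \in \{2, \ldots, k\}$, then $j \geq 3$ (otherwise $e = (v_1,v_2)$), and the edges $\{(v_1,v_2),(v_2,v_3),\ldots,(v_{j-1},v_j),(v_j,v_1)\}$ form a cycle in $T$ per the definition given in the excerpt, contradicting the fact that $T$ is a tree. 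Either case yields a contradiction, so $v_1$ is the endpoint of only one edge in $T$.

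The main obstacle is making sure the cycle in the second case genuinely matches the formal definition of ``cycle'' stated earlier (distinct vertices $v_1, \ldots, v_j$, the last edge closing back to the first vertex); this is essentially bookkeeping on indices and follows because the $v_i$ along the path $P$ are distinct by definition of a simple path, and $v_1 \neq v_j$ since $j \geq 3 \geq 2$. I expect the rest to be straightforward. A natural alternative would be a degree-counting proof via the handshake lemma, but that requires first establishing $|T| = N-1$, which is not stated in the excerpt, so the longest-path route is cleaner.
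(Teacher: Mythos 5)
Your proof is correct, but it takes a genuinely different route from the paper. The paper argues by contradiction: assuming every node meets at least two edges of $T$, it constructs a walk that leaves each node by a different edge than the one by which it arrived, and since $N < \infty$ the walk must revisit a node, producing a cycle in $T$ --- essentially the fact that a finite graph of minimum degree $2$ contains a cycle. You instead use an extremal argument: take a simple path in $T$ with the maximum number of edges and show its endpoint $v_1$ has degree $1$, since any second edge at $v_1$ either extends the path (contradicting maximality) or closes a cycle (contradicting acyclicity). Your case analysis is complete --- the split on whether $u$ lies on the path is exactly what is needed to keep the extended path simple, and your observation that $j \ge 3$ correctly rules out the degenerate case $e = (v_1,v_2)$ in the undirected simple-graph setting. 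What each approach buys: the paper's walk argument is shorter to state but slightly informal (extracting a genuine cycle, in the paper's edge-set sense, from the closed walk takes the same index bookkeeping you perform explicitly), whereas your argument is constructive in the sense that it exhibits a specific degree-one vertex --- in fact both endpoints of the longest path qualify, so your method proves the marginally stronger statement that $T$ has at least \emph{two} such nodes. Your closing remark is also apt: a handshake-lemma count would be circular here, since the paper uses this lemma precisely to establish $|T| = N - 1$ in Theorem \ref{theorem:st}, and both your argument and the paper's correctly avoid that dependency.
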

\begin{proof}
Suppose the contrary,
that each node in $G$ is the endpoint of at least two edges in $T$.
(No node in $G$ can be the endpoint of zero edges in $T$
because $T$ is spanning, and thus connects all nodes.)
Start at any node in $G$ and walk along an edge in $T$.
From the next node, walk along a different edge in $T$.
Continue this walk,
leaving each node by a different edge than the one by which you arrived.
Since $N < \infty$,
at some point you will arrive at a node you have already visited.
This means that $T$ contains a cycle,
which is a contradiction.
So at least one node in $G$ is the endpoint of only one edge in $T$.
\end{proof}

\begin{theorem}
\label{theorem:st}
If $G$ is a connected graph with $N \in \{2,3,4,\dotsc\}$ nodes
and $T$ is a spanning tree of $G$,
then $|T| = N - 1$.
\end{theorem}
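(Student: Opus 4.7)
My plan is to prove the result by induction on $N$, using Lemma~\ref{lemma:degree} to prune a leaf at each step.

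For the base case $N = 2$, connectedness forces $G$ to contain at least one edge, and any spanning tree must consist of exactly the single edge between the two nodes, giving $|T| = 1 = N - 1$.

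For the inductive step, I would assume the claim for every connected graph on $N-1$ nodes and let $T$ be a spanning tree of a connected graph on $N \geq 3$ nodes. Applying Lemma~\ref{lemma:degree} to $T$ (viewed as a connected graph on $V$ that is its own spanning tree) produces a node $v$ incident to exactly one edge $e \in T$. Setting $T' = T \setminus \{e\}$ and regarding it as a graph on $V \setminus \{v\}$, I would then argue that $T'$ is itself a connected acyclic spanning graph on $N-1$ nodes: it is acyclic because it is a subset of the acyclic $T$, and it is connected on $V \setminus \{v\}$ because any $T$-path between two nodes of $V \setminus \{v\}$ cannot pass through $v$ (a leaf has only one incident edge, so a path could not both enter and leave $v$), so every such path lies entirely in $T'$. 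Applying the inductive hypothesis to $T'$ gives $|T'| = (N-1) - 1 = N - 2$, and therefore $|T| = |T'| + 1 = N - 1$.

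The main thing to be careful about is the reduction step: verifying that deleting the leaf really does yield a spanning tree on the remaining $N-1$ nodes rather than disconnecting $T$ or leaving some node uncovered. The observation that $T$-paths between non-$v$ nodes cannot traverse the leaf $v$ is what makes this work cleanly, and a pleasant feature of the argument is that the induction runs entirely within the tree $T$, so the ambient graph $G$ plays no role beyond supplying $T$ in the first place.
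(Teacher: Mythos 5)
Your proof is correct and follows essentially the same route as the paper's: induction on $N$, using Lemma~\ref{lemma:degree} to find a leaf of $T$, deleting that node and its unique incident edge, and applying the inductive hypothesis to conclude $|T| = |T'| + 1 = N - 1$. The only difference is that you explicitly verify that the pruned $T'$ remains connected and spanning on the remaining $N-1$ nodes (via the observation that no $T$-path between other nodes can traverse the leaf), a step the paper simply asserts; this is a welcome extra detail but not a different argument.
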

\begin{proof}
Suppose $N = 2$.
Then $G$ has only one edge,
$T = E$, and $|T| = 1 = N - 1$.
Now suppose the theorem holds for graphs with $N - 1 \ge 2$ nodes
and suppose $G$ has $N$ nodes.
Find a node $v$ in $G$ that is the endpoint of only one edge $e$ in $T$.
Remove $v$ from $G$ to create the new graph $G'$
and remove $e$ from $T$ to create the new tree $T'$.
Since $T'$ is a spanning tree of $G'$,
and $G'$ has $N - 1$ nodes,
$|T'| = N - 2$.
Since $|T'| = |T| - 1$, $|T| = N - 1$.
Thus, through induction,
we have shown that the theorem is true for $N\in\{2,3,4,\dotsc\}$.
\end{proof}

We can extend Theorem \ref{theorem:st} to conclude that,
if $G$ is a graph with $N \in \{2,3,4,\dotsc\}$ nodes
and $K$ components,
and $T$ is a spanning tree of $G$,
then $|T| = N - K$.

\begin{theorem}
\label{theorem:unique}
If $G = (V,E)$ is a connected graph with $N < \infty$ nodes
and unique positive edge weights
then $G$ has exactly one MST.
\end{theorem}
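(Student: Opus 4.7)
The plan is to argue by contradiction using a minimum-weight exchange argument. Suppose for contradiction that $G$ has two distinct MSTs $T_1$ and $T_2$. Since $T_1 \ne T_2$, the symmetric difference $T_1\triangle T_2$ is non-empty, and since edge weights are unique, the weights of the edges in $T_1\triangle T_2$ are all distinct; hence there is a well-defined edge $e$ of strictly minimum weight in $T_1\triangle T_2$. Without loss of generality, assume $e\in T_1\setminus T_2$.

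Next I would use the well-known fact (easy to state, easy to invoke) that adding a single edge $e$ to a spanning tree $T_2$ of $G$ creates a graph $T_2\cup\{e\}$ that contains a cycle $C$, and $e\in C$. I would then argue that $C$ must contain at least one edge $e'$ with $e'\notin T_1$: if every edge of $C$ were in $T_1$, then $T_1$ would itself contain the cycle $C$ (since $e\in T_1$ too), contradicting that $T_1$ is a tree. Thus $e'\in T_2\setminus T_1\subset T_1\triangle T_2$.

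By the minimality of $w(e)$ in $T_1\triangle T_2$ and the uniqueness of edge weights, $w(e)<w(e')$. Consider the edge set $T' = (T_2\setminus\{e'\})\cup\{e\}$. Removing $e'$ from the cycle $C$ in $T_2\cup\{e\}$ breaks the only cycle, and a short count via Theorem \ref{theorem:st} shows $|T'|=N-1$, so $T'$ is a spanning tree of $G$. Its total weight satisfies
\[
\sum_{f\in T'} w(f) = \sum_{f\in T_2} w(f) - w(e') + w(e) < \sum_{f\in T_2} w(f),
\]
contradicting that $T_2$ is an MST. Therefore the MST is unique.

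The main obstacle I anticipate is nothing computational but rather bookkeeping: I must carefully justify (i) the existence of the cycle $C$ in $T_2\cup\{e\}$, (ii) that $C$ contains an edge outside $T_1$ (this is the one place the assumption that $T_1$ is acyclic is used), and (iii) that the swapped set $T'$ is genuinely a spanning tree, i.e., is connected and has no cycle. Since the preceding lemma and Theorem \ref{theorem:st} only count edges of a spanning tree, I would need to briefly argue connectivity of $T'$ directly (any path in $T_2$ that used $e'$ can be rerouted along the rest of $C$ together with $e$).
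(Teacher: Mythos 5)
Your proposal is correct and follows essentially the same exchange argument as the paper's proof: pick the minimum-weight edge $e$ in the symmetric difference of two putative MSTs (well-defined by uniqueness of weights), find the cycle $C$ in $T_2\cup\{e\}$, use acyclicity of $T_1$ to extract an edge $e'\in C\setminus T_1$ of strictly larger weight, and swap to contradict minimality of $T_2$. The only difference is cosmetic --- your extra care in verifying that $T' = \left(T_2\setminus\{e'\}\right)\cup\{e\}$ is genuinely a spanning tree is a point the paper's proof leaves implicit.
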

\begin{proof}
If $N \le 1$ then $E = \emptyset$ and the MST is empty.
If $N = 2$ then $|E| = 1$ and the MST is $E$.
Suppose $N \ge 3$.
Suppose the opposite of the statement of the theorem,
that $G$ has more than one MST.
Let $A$ and $B$ denote two distinct MSTs of $G$.
Let $a = \arg\min_{e\in A\triangle B} w(e)$.
Since the edge weights are unique, so is $a$.
Without loss of generality, assume $a\in A$.
Since $B$ is a spanning tree,
$B\cup\{a\}$ contains a cycle $C$ containing $a$.
Since $A$ is a tree,
$A$ cannot contain $C$,
meaning $C$ must contain an edge $b \notin A$.
Since $a,b\in A\triangle B$ and $a = \arg\min_{e\in A\triangle B} w(e)$,
$w(b) > w(a)$.
This means $B\cup\{a\}\setminus\{b\}$
is a spanning tree with lower total edge weight than $B$,
which is a contradiction.
Thus $G$ has exactly one MST.
\end{proof}

\begin{theorem}
\label{theorem:excluded}
Suppose $G = (V,E)$ is a connected graph with $N < \infty$ nodes
and edges with unique positive weights.
Let $T$ be the (unique) MST of $G$.
Then $e\in E\setminus T$ if and only if
$e$ belongs to a cycle $C$ in $G$ and $e$ has greater weight
than every other edge in $C$.
\end{theorem}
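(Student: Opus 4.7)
The proof will establish the two implications separately, both by a swap (exchange) argument against the minimality of $T$.

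For the forward direction, suppose $e = (u,v) \in E \setminus T$. Since $T$ is a spanning tree of the connected graph $G$, there is a unique path $P \subset T$ from $u$ to $v$, and $C := P \cup \{e\}$ is a cycle in $G$ containing $e$. I claim $e$ is the strictly heaviest edge in $C$. Suppose to the contrary that some $e' \in C \setminus \{e\}$ satisfies $w(e') > w(e)$; note $e' \in T$. Removing $e'$ from $T$ disconnects it into two components whose vertex sets I call $V_1$ and $V_2$; because $C$ is a cycle containing $e'$, the endpoints of $e'$ lie on opposite sides, and traversing $C$ without $e'$ must cross back from $V_2$ to $V_1$, so the other ``crossing'' edge in $C$ must be $e$ itself (the rest of $C$ lies in $T$, hence within the components). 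Therefore $T' := (T \setminus \{e'\}) \cup \{e\}$ has $N-1$ edges and is connected, so it is a spanning tree, and its total weight is strictly less than that of $T$, contradicting minimality.

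For the converse, suppose $e$ lies on some cycle $C$ and $w(e) > w(e'')$ for every other $e'' \in C$. Assume for contradiction that $e \in T$. Removing $e$ from $T$ splits $T$ into two components with vertex sets $V_1$ and $V_2$, since $T$ has exactly $N-1$ edges by Theorem \ref{theorem:st}. The endpoints of $e$ lie on opposite sides of this cut, and since $C$ is a cycle through $e$, walking around $C$ from one endpoint back to the other without using $e$ must cross the cut $V_1 \mid V_2$ an odd number of times, hence at least once. Pick any such crossing edge $e'' \in C \setminus \{e\}$; then $T'' := (T \setminus \{e\}) \cup \{e''\}$ is connected with $N-1$ edges, hence spanning, and $w(T'') < w(T)$ because $w(e'') < w(e)$. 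This again contradicts the minimality of $T$, so $e \notin T$.

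The main technical point I expect to need care with is the parity/connectivity argument showing that a spanning-tree edge $e$ on a cycle $C$ can always be exchanged with another edge of $C$ across the induced cut — i.e.\ that some other edge of $C$ actually crosses between $V_1$ and $V_2$. I will justify this by noting that $C \setminus \{e\}$ is a path in $G$ between the two endpoints of $e$, and any path between vertices in different components of $T \setminus \{e\}$ must contain at least one edge with endpoints in different components. Uniqueness of the MST from Theorem \ref{theorem:unique} is not needed for the swap arguments themselves, but the strict inequalities available under unique edge weights make the contradictions clean.
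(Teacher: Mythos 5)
Your proof is correct and takes essentially the same approach as the paper: both directions are edge-exchange arguments against the minimality of $T$, with your cycle $C = P \cup \{e\}$ in the forward direction being the same cycle the paper extracts from $T \cup \{e\}$. One remark: the care you take in the converse is actually warranted beyond what the paper supplies --- the paper asserts that $T \cup \{e'\} \setminus \{e\}$ is a spanning tree for \emph{any} other edge $e' \in C$, which is false as literally stated (e.g.\ if $e' \in T$ the swap merely deletes $e$ and disconnects the tree), whereas your selection of a cut-crossing edge $e'' \in C \setminus \{e\}$, which must exist because $C \setminus \{e\}$ is a path between vertices lying in different components of $T \setminus \{e\}$, is precisely the justification needed to make the exchange legitimate.
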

\begin{proof}
Suppose $e\in E\setminus T$.
Then $T\cup\{e\}$ contains a cycle $C$.
The weight of $e$ cannot be equal to the weight of any other edge in $C$
because all the edge weights are unique.
If $\exists\, e'\in C$ such that $w(e') > w(e)$,
then $T\cup\{e\}\setminus\{e'\}$ would be a spanning tree
with smaller total edge weight than $T$,
and $T$ would not be an MST.
Thus $e$ has edge weight greater than every other edge in $C$.

Suppose $e$ belongs to a cycle $C$ in $G$ and $e$ has greater weight
than every other edge in $C$.
Suppose $e\in T$.
If $e'$ is any other edge in $C$ then $T\cup\{e'\}\setminus\{e\}$
would be a spanning tree with smaller total edge weight than $T$,
which is a contradiction.
Thus $e\notin T$.
\end{proof}

\begin{theorem}
\label{theorem:included}
Suppose $G = (V,E)$ is a connected graph with $N < \infty$ nodes
and edges with unique positive weights.
Let $T$ be the (unique) MST of $G$.
Then $e\in T$ if and only if
$e$ belongs to a cut $D$ in $G$ and $e$ has lower weight
than every other edge in $D$.
\end{theorem}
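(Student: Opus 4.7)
The plan is to mirror the structure of the proof of Theorem \ref{theorem:excluded} (the cycle property), exploiting the natural duality between cycles and cuts via MST exchange arguments.

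For the forward direction, assume $e = (u,v) \in T$. I would first observe that $T\setminus\{e\}$ has exactly two connected components, since $T$ has $N-1$ edges by Theorem \ref{theorem:st} and removing any edge from a tree disconnects it; let $V_1$ and $V_2$ be the vertex sets of these components (with $u\in V_1$, $v\in V_2$). These partition $V$, so they define a cut $D$ containing $e$. To show $e$ has minimum weight in $D$, suppose for contradiction some $e'\in D$ satisfies $w(e')<w(e)$. Then $e'$ connects a node in $V_1$ to a node in $V_2$, so $(T\setminus\{e\})\cup\{e'\}$ is again a spanning tree, but with strictly smaller total weight than $T$, contradicting the MST property.

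For the reverse direction, suppose $e$ lies in some cut $D$ and has strictly smaller weight than every other edge of $D$. Suppose for contradiction that $e\notin T$. Then $T\cup\{e\}$ contains a cycle $C$ through $e$. The key observation is that any cycle must cross the cut $D$ an even number of times (each traversal from $V_1$ to $V_2$ must eventually be undone). Thus $C$ contains another edge $e'\in D$ distinct from $e$, with $w(e')>w(e)$. Then $(T\cup\{e\})\setminus\{e'\}$ is a spanning tree (removing $e'$ breaks the cycle $C$ but preserves connectivity), and its total weight is strictly less than that of $T$, contradicting minimality. Hence $e\in T$.

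I do not anticipate any truly hard step; everything follows from the same swap idea used in Theorem \ref{theorem:unique} and Theorem \ref{theorem:excluded}. The one point needing a careful justification is the claim that a cycle meets a cut in an even number of edges; I would argue this by walking around the cycle and tracking which side of the partition each vertex lies on, noting that each edge of $D$ toggles the side, and the walk must return to its starting vertex and hence its starting side. The other minor item is to confirm that $T\setminus\{e\}$ has exactly two components, which follows immediately from $T$ being a tree plus the edge count in Theorem \ref{theorem:st}.
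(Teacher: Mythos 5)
Your proposal is correct and follows essentially the same route as the paper: in the forward direction, both remove $e$ from $T$ to form the fundamental cut and swap in a hypothetical lighter cut edge; in the reverse direction, both add $e$ to $T$, find a second cut edge $e'$ on the resulting cycle, and swap it out for a contradiction. The only cosmetic difference is your justification for the second cut edge via the parity of cycle--cut crossings, where the paper more directly notes that $C\setminus\{e\}$ is a path between the two sides of the cut and hence must contain an edge of $D$.
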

\begin{proof}
Suppose $e\in T$
but each cut containing $e$ contains another edge with lower weight
than $e$.
Removing $e$ from $T$ would split $T$ into two components,
$T_1 $ and $T_2$,
where $T = T_1 \cup T_2 \cup \{e\}$.
Let $V_1$ denote the set of endpoints of edges in $T_1$,
let $V_2$ denote the set of endpoints of edges in $T_2$,
and let $D$ denote the set of edges in $E$
with one endpoint in $V_1$ and the other endpoint in $V_2$.
Let $e'$ denote another edge in $D$ with lower weight than $e$.
Then $T\cup\{e'\}\setminus\{e\}$ is a spanning tree
with lower total edge weight than $T$,
a contradiction.
Thus if $e\in T$ then $e$ belongs to a cut $D$
and has the lowest weight of any edge in $D$.

Suppose $e = (v_1,v_2)$ belongs to a cut $D$
in $G$ and $e$ has lower weight
than every other edge in $D$.
Let $V_1$ and $V_2$ denote the two sets of vertices separated
by this cut,
with $v_1 \in V_1$ and $v_2\in V_2$.
If $e\notin T$
then $T\cup\{e\}$ contains a cycle $C$.
$C\setminus\{e\}$ is a path from $v_1 \in V_1$ to $v_2 \in V_2$,
and thus contains an edge $e' \in D$.
By assumption, $w(e) < w(e')$.
Thus $T\cup\{e\}\setminus\{e'\}$
is a spanning tree with lower total edge weight than $T$,
a contradiction.
Thus $e\in T$.
\end{proof}

At this point it is necessary to define more rigorously our first
quantity of interest, the negative predictive value (NPV).
We consider two related but not necessarily equivalent approaches.
First, let $G_n = (V_n,E_n)$ be the subgraph of $G = (V,E)$ induced
by sampling $n$ nodes from $V$.
(At this point we do not specify the sampling mechanism.)
Assuming $G$ has unique edge weights,
let $T$ be the unique MST of $G$
and let $T_n$ be the unique MST of $G_n$.
Finally, let $e$ be an edge selected uniformly at random from $E$.
We want to know $P(e\in E\setminus T|e\in E_n\setminus T_n)$.
Of course, this quantity is only defined if
$P(e\in E_n\setminus T_n) > 0$.
The second approach is to find
\[
E\left(\frac{\left|E_n\setminus (T\cup T_n)\right|}{\left|E_n\setminus T_n\right|} I\left(\left|E_n\setminus T_n\right| > 0\right)\right) \text{,}
\]
where $I(A) = 1$ if event $A$ transpires and $0$ otherwise.

\begin{theorem}
\label{theorem:npv}
Let $G = (V,E)$ be a graph with $N < \infty$ nodes,
unique positive edge weights,
and MSF $T$.
Let $G_n = (V_n,E_n)$ be a subgraph of $G$ with $n\in\{0,1,\dotsc,N\}$ nodes
and MSF $T_n$.
Then $T \cap E_n\setminus T_n = \emptyset$.
\end{theorem}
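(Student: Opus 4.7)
The plan is to prove the contrapositive: if $e \in E_n \setminus T_n$, then $e \notin T$. In words, any edge that is present in the sample but excluded from the sample MSF must also be excluded from the population MSF. This is the natural direction to attack because the cycle property (Theorem \ref{theorem:excluded}) characterizes non-MST edges via a purely local witness (a cycle on which the edge is heaviest), and such a witness transfers easily from a subgraph to a supergraph.

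Before applying Theorem \ref{theorem:excluded}, I need a small extension. The theorem as stated is for connected graphs and their MSTs, but here both $G$ and $G_n$ may be disconnected, so $T$ and $T_n$ are MSFs. I would first observe that every cycle in any graph lies entirely inside a single connected component, and that the MSF of a graph is, by definition, the union of the MSTs of its components. Consequently, for any edge $e$ of a (possibly disconnected) graph $H$ with unique positive weights and MSF $F$, we have $e \in E(H) \setminus F$ if and only if $e$ lies in a cycle of $H$ on which it has strictly greatest weight. This is just Theorem \ref{theorem:excluded} applied to the connected component of $H$ that contains the endpoints of $e$.

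With this extension in hand the argument is essentially one line. Assume $e \in E_n \setminus T_n$. Applying the extended cycle property to $G_n$ produces a cycle $C \subseteq E_n$ containing $e$ on which $e$ is strictly heaviest. Because $V_n \subseteq V$ and $E_n \subseteq E$, this same set $C$ is a cycle in $G$, and $e$ remains strictly heaviest in $C$ (edge weights are inherited from $G$). Applying the extended cycle property in the other direction, now to $G$, forces $e \notin T$. This establishes the contrapositive, and hence $T \cap (E_n \setminus T_n) = \emptyset$.

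There is no real obstacle beyond the bookkeeping needed to promote Theorem \ref{theorem:excluded} from the MST setting to the MSF setting; once that is handled by restricting to the relevant component, the heart of the proof is simply noting that a cycle witnessing $e$'s exclusion from $T_n$ is automatically a cycle witnessing $e$'s exclusion from $T$.
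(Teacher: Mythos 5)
Your proof is correct and takes essentially the same route as the paper's: both arguments produce a cycle in $E_n$ on which $e$ is strictly heaviest and then transfer that cycle witness to $G$ via Theorem~\ref{theorem:excluded} to conclude $e \notin T$ (the paper phrases it as a contradiction starting from $e \in T \cap E_n \setminus T_n$ and derives the cycle by a direct exchange argument in $T_n$, while you invoke the cycle property in $G_n$ and argue by contrapositive, which is the same logic). If anything, your proposal is slightly more careful than the paper, since you explicitly extend Theorem~\ref{theorem:excluded} from connected graphs with MSTs to possibly disconnected graphs with MSFs by restricting to the component containing $e$, a step the paper's proof silently elides.
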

\begin{proof}
Suppose $\exists\, e\in T \cap E_n\setminus T_n$.
Then $\{e\}\cup T_n$ contains a cycle $C$.
If there exists an edge $e'\in C$ with greater weight than $e$
then $T_n\cup\{e\}\setminus\{e'\}$
is a spanning tree with smaller weight than $T_n$,
a contradiction.
Thus $e$ has the largest weight of any edge in $C$.
Since $C\subset E_{n}\subset E$,
Theorem \ref{theorem:excluded} implies $e\notin T$.
This is a contradiction,
so $T \cap E_n\setminus T_n = \emptyset$.
\end{proof}
Theorem \ref{theorem:npv} implies that
$E_n\setminus T_n = E_n\setminus (T\cup T_n)$, meaning
\[
P(e\in E\setminus T|e\in E_n\setminus T_n) = \frac{P(e\in E_n\setminus (T\cup T_n))}{P(e\in E_n\setminus T_n)} = 1
\]
and 
\[
E\left(\frac{\left|E_n\setminus (T\cup T_n)\right|}{\left|E_n\setminus T_n\right|} I\left(\left|E_n\setminus T_n\right| > 0\right)\right) = 1 \text{.}
\]
In other words, for both approaches, the NPV is 1.
This is irrespective of how $G$ is generated
or how the nodes in $V_n$ are sampled;
we just require that the edge weights be unique
and that the appropriate denominators are non-zero.

Our next task is to find the positive predictive value, or PPV.
Just as with the NPV, we take two approaches.
We want to find
\[
P\left(e\in T\middle| e\in T_n\right) \quad \text{and} \quad E\left(\frac{|T\cap T_n|}{|T_n|} I(|T_n| > 0)\right) \text{.}
\]
Unlike with the NPV,
these values will depend on how $G$ is generated
and how the nodes in $V_n$ are sampled.
We begin with the case where $G$ is a complete graph
and the nodes in $V_n$ are sampled uniformly at random from $V$.

\begin{theorem}
\label{theorem:complete}
Let $G = \left(V,E\right)$ be a complete graph
with $N < \infty$ nodes and positive, unique edge weights.
Let $G_n = \left(V_n,E_n\right)$,
where $V_n$ contains $n\in\{2,3,\dotsc,N\}$
nodes selected uniformly at random from $V$,
and where $E_n$ contains those edges from $E$
that have both endpoints in $V_n$.
(In other words,
$G_n$ is the subgraph of $G$ induced by the nodes in $V_n$.)
Define $T$ to be the unique MST of $G$
and define $T_n$ to be the unique MST of $G_n$.
Then
$P\left(e\in T \middle|e\in T_n \right) = E\left(\frac{|T\cap T_n|}{|T_n|} I(|T_n| > 0)\right) = \frac{n}{N}$.
\end{theorem}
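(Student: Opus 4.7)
The plan is to reduce the PPV to a ratio of two easy expectations by exploiting Theorem \ref{theorem:npv} together with the fact that $G_n$ inherits completeness from $G$. First I would observe that because $G$ is complete, the induced subgraph $G_n$ is itself complete on $n$ vertices, hence connected for $n\ge 2$. Theorem \ref{theorem:st} then forces $|T_n| = n-1$ deterministically, so the indicator $I(|T_n|>0)$ equals $1$ almost surely and will drop out.

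Next I would apply Theorem \ref{theorem:npv}, which asserts $T \cap (E_n \setminus T_n) = \emptyset$. This yields the key identity $T \cap T_n = T \cap E_n$: any population-MST edge that happens to lie in $E_n$ must also lie in $T_n$. This is the crucial simplification, because it replaces intersection with a random tree $T_n$ by intersection with the edge set $E_n$, which depends only on which nodes were sampled and is therefore easy to analyze via hypergeometric probabilities.

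Then I would compute $E|T \cap T_n| = \sum_{e \in T} P(e \in E_n)$. Under uniform random node sampling, a fixed edge lies in $E_n$ iff both endpoints are sampled, which occurs with probability $\binom{N-2}{n-2}/\binom{N}{n} = n(n-1)/[N(N-1)]$; and Theorem \ref{theorem:st} gives $|T| = N-1$, so $E|T \cap T_n| = n(n-1)/N$. Dividing by the deterministic $|T_n| = n-1$ immediately yields $n/N$ for the second form of the PPV. For the first form, since $e$ is drawn uniformly from $E$ and independently of $V_n$, both $P(e\in T\cap T_n)$ and $P(e\in T_n)$ equal the corresponding expected cardinality divided by $|E|$, so the conditional probability collapses to $E|T\cap T_n|/E|T_n| = n/N$.

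The only non-routine step is spotting the identity $T \cap T_n = T \cap E_n$ from Theorem \ref{theorem:npv}; after that, everything reduces to hypergeometric counting and cancellation. The one subtlety I would be explicit about is that the first formulation is a joint probability over both $e$ and $V_n$, and translating it into a ratio of expectations uses the independence of the uniform edge draw from the random sample $V_n$.
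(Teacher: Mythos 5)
Your proposal is correct and takes essentially the same route as the paper: both arguments hinge on Theorem \ref{theorem:npv} to replace $T\cap T_n$ by $T\cap E_n$, use completeness of $G_n$ to fix $|T_n| = n-1$ (so the indicator is trivial), and reduce everything to the per-edge sampling probability $\binom{n}{2}/\binom{N}{2} = n(n-1)/[N(N-1)]$. Your handling of the first form via expected cardinalities conditional on $V_n$ is just a mild repackaging of the paper's independence step $T \independent E_n$, with the same cancellation at the end.
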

\begin{proof}
Since $G$ is complete, $G_n$ must be connected,
so $|T_n| = n - 1$ and
\[
P(e\in T_n) = \frac{|T_n|}{|E|} = \frac{n - 1}{\binom{N}{2}} \text{.}
\]
Since the nodes in $V_n$ are selected uniformly at random from $V$,
without respect to whether they are the endpoints of edges in $T$,
$T \independent E_n$.
Thus
\begin{align*}
P\left(e\in T\cap T_n\right) &= P\left( e\in T\cap T_n\cap E_n\right) & (1) \\
&= P\left( e\in T\cap E_n\right) & (2) \\
&= P(e\in T) P(e\in E_n) \\
&= \frac{|T|}{|E|} \frac{|E_n|}{|E|} \\
&= \frac{N-1}{\binom{N}{2}} \frac{\binom{n}{2}}{\binom{N}{2}} \text{.} \\
\end{align*}
Note that we used Theorem \ref{theorem:npv} to move from (1) to (2).
So
\begin{align*}
P\left(e\in T \middle|e\in  T_n\right) &= \frac{P\left(e\in T \cap T_n\right)}{P\left(e\in T_n\right)} \\
&= \frac{n}{N} \text{.} \\
\end{align*}

Since $G$ is connected and $n\ge 2$, $|T_n| = n - 1 > 0$,
so
\[
E\left(\frac{|T\cap T_n|}{|T_n|} I(|T_n| > 0)\right) = E\left(\frac{|T\cap T_n|}{|T_n|} \right) = \frac{E\left(|T\cap T_n|\right)}{n - 1} \text{.} 
\]
Using Theorem \ref{theorem:npv} again,
$E\left(|T\cap T_n|\right) = E\left(|T\cap E_n|\right)$.
If $e_1,\dotsc,e_{N-1}$ is an enumeration of the edges in $T$,
then
\[
E\left(|T\cap E_n|\right) = \sum_{i=1}^{N-1} E\left[I(e_i \in E_n)\right] = \sum_{i=1}^{N-1} \frac{\binom{n}{2}}{\binom{N}{2}} = (N-1) \frac{\binom{n}{2}}{\binom{N}{2}}
\]
and
\[
E\left(\frac{|T\cap T_n|}{|T_n|} I(|T_n| > 0)\right) = \frac{N-1}{n-1} \frac{\binom{n}{2}}{\binom{N}{2}} = \frac{n}{N} \text{.}
\]
\end{proof}
In other words,
under the conditions of Theorem \ref{theorem:complete},
the probability that an edge is in the population MST
given that it is in the sample MST is equal to the proportion
of the population that has been sampled.
Applied researchers can increase this probability by
recruiting more participants,
and the increase is linear in sample size.

Theorem \ref{theorem:complete} relies on two key facts:
The first is that $|T|$, $|T_n|$, and $|E_n|$ are known constants,
which results from $G$ being complete.
The second is that $T$ is independent of $E_n$,
which results from sampling the nodes uniformly at random.
In Theorem \ref{theorem:gnp},
we consider a scenario where $|T|$, $|T_n|$, and $|E_n|$ are random,
but $T$ is still independent of $E_n$.

\begin{theorem}
\label{theorem:gnp}
Let $G$ and $G_n$ be defined as in Theorem \ref{theorem:complete}.
Let $G' = \left(V,E'\right)$,
where each edge from $E$ is included in $E'$
independently and with probability $p$.
Let $G_n' = \left(V_n,E_n'\right)$,
where $E_n'$ contains those edges from $E'$
that have both endpoints in $V_n$.
(In other words,
$E_n' = E_n \cap E'$.)
Define $T'$ to be the unique MSF of $G'$
and $T_n'$ to be the unique MSF of $G_n'$.
Let $K'$ denote the number of components in $G'$
and $K_n'$ denote the number of components in $G_n'$.
Then
\[
P\left( e\in T' \middle|e\in  T_n' \right) = \frac{n}{N} \left(\frac{n-1}{N-1}\right) \left(\frac{N - E\left(K'\right)}{n - E\left(K_n'\right)}\right) \text{.}
\]
\end{theorem}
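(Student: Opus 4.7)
The plan is to follow the template of Theorem \ref{theorem:complete}, adapting each step to handle the new randomness introduced by the edge-inclusion process. Because $G'$ still has unique edge weights (inherited from $G$), Theorem \ref{theorem:npv} applies pointwise to $G'$ and $G_n'$ and gives $T' \cap E_n' \setminus T_n' = \emptyset$ deterministically. My first step would be to use this to rewrite $P(e \in T' \cap T_n') = P(e \in T' \cap E_n')$, and then to observe that since $T' \subseteq E'$ and $E_n' = E_n \cap E'$, the intersection $T' \cap E_n'$ collapses to $T' \cap E_n$.

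Second, I would invoke the key independence $T' \independent E_n$. This is the main conceptual step and deserves the most care: $T'$ is a function of the edge weights of $G$ and of which edges are present in $G'$, whereas $E_n$ depends only on the uniformly random node sample $V_n$. Since these three sources of randomness are mutually independent by construction, the factorization
\[
P(e \in T' \cap E_n) = P(e \in T') \cdot P(e \in E_n)
\]
goes through when $e$ is drawn uniformly from $E$.

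Third, I would compute each factor. Using the extension of Theorem \ref{theorem:st} to disconnected graphs, $|T'| = N - K'$ and $|T_n'| = n - K_n'$, so $P(e \in T') = (N - E(K'))/\binom{N}{2}$ and $P(e \in T_n') = (n - E(K_n'))/\binom{N}{2}$, while $P(e \in E_n) = \binom{n}{2}/\binom{N}{2}$. Dividing numerator by denominator and simplifying $\binom{n}{2}/\binom{N}{2} = n(n-1)/[N(N-1)]$ then yields the stated expression. The analogous second-moment formulation (the expectation of the ratio) should follow by the same $T' \independent E_n$ argument combined with linearity of expectation over the indicators $I(e_i \in E_n)$, exactly as in the proof of Theorem \ref{theorem:complete}.

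The main obstacle is the independence step: one must be explicit that the uniform node sample is drawn without reference to either the edge weights or the Bernoulli edge-inclusion variables, otherwise the factorization can fail. A secondary technicality is ensuring the conditioning event has positive probability, i.e. $E(|T_n'|) = n - E(K_n') > 0$, which holds as long as $n \ge 2$ and $p > 0$ so that $G_n'$ is nonempty with positive probability. Neither issue requires new ideas, but both must be flagged so the identity is treated as a ratio of genuine probabilities rather than a formal manipulation.
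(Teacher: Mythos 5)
Your proof of the stated identity is correct and follows the same skeleton as the paper's: reduce $P(e\in T'\cap T_n')$ to $P(e\in T')\,P(e\in E_n)$ via Theorem \ref{theorem:npv}, then evaluate the marginals through expected component counts using the extension of Theorem \ref{theorem:st} ($|T'| = N - K'$, $|T_n'| = n - K_n'$). Where you genuinely differ is the middle step: the paper reaches the factorization through a chain of manipulations built on the conditional independence $\left(e\in T' \independent e\in E_n'\right)\,\big|\; e\in E'$, repeatedly conditioning on $e\in E'$ and unwinding, whereas you note that $T'\subseteq E'$ forces the purely set-theoretic collapse $T'\cap E_n' = T'\cap (E_n\cap E') = T'\cap E_n$, after which a single unconditional independence $T'\independent E_n$ suffices. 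Your route is shorter and isolates the one fact actually doing the work (the node sample is independent of both the edge weights and the Bernoulli inclusions); as in the paper's own Theorem \ref{theorem:complete}, the factorization for a uniformly drawn $e$ also quietly uses that $P(f\in E_n) = \binom{n}{2}/\binom{N}{2}$ is the same for every fixed $f\in E$, which you should make explicit rather than leave inside the phrase ``the factorization goes through.'' Incidentally, your marginal $P(e\in T_n') = \bigl(n - E(K_n')\bigr)/\binom{N}{2}$ is the one consistent with the theorem's final formula (the $\binom{n}{2}$ appearing at the corresponding point in the paper's proof is evidently a typo), and your observation that $p>0$ and $n\ge 2$ are needed for the conditioning event to have positive probability is a legitimate point the paper leaves unstated.

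One claim in your closing paragraph is wrong, however: the assertion that the expectation-of-ratio formulation $E\left(\frac{|T'\cap T_n'|}{|T_n'|}\, I(|T_n'|>0)\right)$ ``should follow by the same argument, exactly as in the proof of Theorem \ref{theorem:complete}.'' It does not. That argument relies on $|T_n| = n-1$ being a deterministic constant, so the denominator pulls out of the expectation; here $|T_n'| = n - K_n'$ is random and not independent of the numerator, so the expectation of the ratio does not reduce to a ratio of expectations. The paper says as much immediately after this theorem: the authors were unable to determine this quantity analytically and studied it by simulation instead. Since the theorem as stated claims only the conditional probability, this overreach does not invalidate your proof of the statement itself, but the sentence should be deleted or corrected.
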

\begin{proof}
Note that $T_n' \subset E_n'$
and $\left( e \in T' \independent  e \in E_n'\right)|  e \in E'$.
That is, if an edge is in $E'$,
whether or not it is in $T'$ has no bearing on whether or not
it will be included in $E_n'$.
Using this fact and Theorem \ref{theorem:npv},
\begin{align*}
P\left(e\in T'\cap T_n'\right) &= P\left(e\in  T'\cap E_n'\right) \\
&= P\left(e\in  T'\cap E_n'\cap E'\right) \\
&= P\left(e\in  T'\cap E_n'\middle|e\in   E'\right) P\left(e\in   E'\right) \\
&= P\left(e\in  T'\middle|e\in   E'\right) P\left(e\in   E_n'\middle|  e\in E'\right) P\left( e\in  E'\right) \\
&= P\left(e\in  T'\cap E'\right) P\left( e\in  E_n'\middle| e\in  E'\right) \\
&= P\left(e\in  T'\right) P\left( e\in  E_n'\middle|e\in   E'\right) \\
&= P\left(e\in  T'\right) P\left( e\in  E_n\cap E'\middle|e\in   E'\right) \\
&= P\left(e\in  T'\right) \frac{P\left( e\in  E_n\cap E'\right)}{P\left( e\in E'\right)} \\
&= P\left(e\in  T'\right) \frac{P\left( e\in  E_n\right) P\left(   e\in E'\right)}{P\left(e\in  E'\right)} \\
&= P\left(e\in  T'\right) P\left(e\in   E_n\right) \text{.} \\
\end{align*}
Next,
\begin{align*}
P\left(e\in  T'\right) 
&= \sum_{k=1}^{N}  P\left(e\in  T'\middle| K' = k\right) P\left( K' = k\right) \\
&= \sum_{k=1}^{N}  \frac{N-k}{\binom{N}{2}} P\left( K' = k\right) \\
&= \frac{1}{\binom{N}{2}} \left[N - E\left(K'\right)\right] \text{.} \\
\end{align*}
Similarly,
\[
P\left(e\in  T_n'\right) = \frac{1}{\binom{n}{2}} \left[n - E\left(K_n'\right)\right] \text{.}
\]
Thus,
\begin{align*}
P\left(e\in  T' \middle| e\in  T_n' \right) &= \frac{P\left(e\in  T'\cap T_n'\right)}{P\left(e\in  T_n'\right)} \\
&= \frac{P\left(e\in  T'\right) P\left( e\in  E_n\right)}{P\left(e\in  T_n'\right)} \\
&= \frac{N - E\left(K'\right)}{n - E\left(K_n'\right)} \frac{n(n-1)}{N(N-1)} \text{.} \\
\end{align*}
\end{proof}
If $E\left(K'\right)$ and $E\left(K_n'\right)$ approach $1$
as $n$ and $N$ increase toward infinity
then $P\left(e\in T' \middle|e\in  T_n' \right)\to\frac{n}{N}$,
which is the result for the complete graph.
We were unable to determine
$E\left(\frac{|T\cap T_n|}{|T_n|} I(|T_n| > 0)\right)$
analytically,
so we explored it through simulation
(described in the next section).

At this point we turn to graphs with more than one MST.

\begin{theorem}
\label{theorem:bijective}
Suppose $G = (V,E)$ is a connected, weighted graph
with finitely many nodes
and more than one MST.
Let $A$ and $B$ denote two of these MSTs.
Then there exists a bijective
function $g : A\setminus B \to B \setminus A$
such that the weight of $a$ is equal to the weight of $g(a)$,
$a$ and $g(a)$ belong to a cycle $C \subset B\cup\{a\}$
in which they are the maximum weight edges,
and
$a$ and $g(a)$ belong to a cut $D\subset E$
in which they are the minimum weight edges.
\end{theorem}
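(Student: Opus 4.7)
The plan is to reduce the theorem to a single-edge exchange lemma and then iterate by induction on $|A \triangle B|$. The exchange lemma states: for every $a \in A \setminus B$ there exists $b \in B \setminus A$ lying simultaneously in the cycle $C_a \subset B \cup \{a\}$ created when $a$ is added to $B$ and in the cut $D_a \subset E$ that separates the two components of $A \setminus \{a\}$, with $w(a) = w(b)$. This lemma packages the pairwise conditions the theorem demands: $w(a) = w(b)$, and $a, b$ are jointly maximum-weight edges in $C_a$ and jointly minimum-weight edges in $D_a$.

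The main obstacle is proving the lemma, specifically producing a single $b$ that lies in all three of $C_a$, $D_a$, and $B \setminus A$. I would first derive two one-sided weight inequalities. For any $e \in C_a$, the set $B \cup \{a\} \setminus \{e\}$ is a spanning tree, so minimality of $B$ forces $w(e) \le w(a)$; hence $a$ is maximum-weight in $C_a$. Dually, for any $e \in D_a$, the set $A \cup \{e\} \setminus \{a\}$ is a spanning tree, so minimality of $A$ forces $w(e) \ge w(a)$; hence $a$ is minimum-weight in $D_a$. Next, $C_a \setminus \{a\}$ is a path in $B$ between the two endpoints of $a$, and those endpoints lie on opposite sides of the bipartition defining $D_a$, so the path crosses the cut and yields some $b \in (C_a \setminus \{a\}) \cap D_a \subset B \cap D_a$. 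If $b$ were in $A$, it would be a second edge of $A$ crossing the bipartition, contradicting the fact that $a$ is the only such edge. Hence $b \in B \setminus A$, and the two weight inequalities squeeze $w(b) = w(a)$.

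With the lemma in hand, the bijection follows by induction on $|A \triangle B|$. The base case $A = B$ is vacuous. For the inductive step, choose any $a \in A \setminus B$, let $b$ be the edge supplied by the lemma, and set $A' = A \setminus \{a\} \cup \{b\}$. Because $b$ crosses $D_a$, the set $A'$ is a spanning tree, and because $w(a) = w(b)$ it is another MST. A short set calculation gives $A' \setminus B = (A \setminus B) \setminus \{a\}$ and $B \setminus A' = (B \setminus A) \setminus \{b\}$, so $|A' \triangle B| = |A \triangle B| - 2$. Applying the inductive hypothesis to the MSTs $A'$ and $B$ yields a bijection $g'$ on the smaller symmetric difference satisfying all three properties; extending by $g(a) = b$ produces the desired $g$. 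The cycle and cut witnesses for each $x \ne a$ carry over verbatim, since they are only required to lie in $B \cup \{x\}$ and in $E$, neither of which depends on whether $A$ or $A'$ is used; for the new pair $(a,b)$ we take $C_a$ and $D_a$ themselves.
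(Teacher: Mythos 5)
Your proposal is correct and takes essentially the same route as the paper's proof: the identical single exchange step (the cycle $C_a \subset B\cup\{a\}$, the cut $D_a$ obtained by deleting $a$ from $A$, an edge $b \in (C_a\setminus\{a\})\cap D_a$ forced into $B\setminus A$, and the weight squeeze $w(b)\le w(a)\le w(b)$), followed by repeated exchange through the intermediate MST $A\cup\{b\}\setminus\{a\}$ to assemble the bijection. Your induction on $|A\triangle B|$ is simply a formalized version of the paper's iteration $T_1, T_2, \dotsc, T_k = B$, and your remark that the cycle and cut conditions reference only $B$ and $E$ (so the witnesses survive replacing $A$ by $A'$) makes explicit a point the paper leaves implicit.
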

\begin{proof}
Take $a_1 = (v_1,v_2)\in A\setminus B$.
(If $A\setminus B = \emptyset$ then $A\subset B$,
and since $|A| = |B|$,
that would imply that $A = B$.)
Removing $a_1$ from $A$ would split $A$ into two components,
$A_1 $ and $A_2$,
where $A = A_1 \cup A_2 \cup \{a_1\}$.
Let $V_1$ denote the set of endpoints of edges in $A_1$,
let $V_2$ denote the set of endpoints of edges in $A_2$,
with $v_1\in V_1$ and $v_2\in V_2$,
and let $D_1$ denote the set of edges in $E$
with one endpoint in $V_1$ and the other endpoint in $V_2$.
Since $a_1\notin B$,
$B\cup\{a_1\}$ contains a cycle $C_1$.
Since $C_1\setminus\{a_1\}$ consists of a path from $v_1\in V_1$
to $v_2\in V_2$,
there is at least one edge $b_1 \in C_1\setminus\{a_1\}$
that is also in $D_1$.
Since $D_1 \cap A = \{a_1\}$ and $b_1 \ne a_1$,
$b_1 \notin A$.
So $b_1\in B\setminus A$.
\begin{enumerate}
\item Suppose the weight of $b_1$ is strictly greater than the weight of $a_1$.
Then $B\cup \{a_1\}\setminus \{b_1\}$
would be a spanning tree with total edge weight less
than $B$,
and $B$ would not be an MST.
\item Suppose the weight of $b_1$ is strictly less than the weight of $a_1$.
Then $A\cup \{b_1\}\setminus \{a_1\}$
would be a spanning tree with total edge weight less than $A$,
and $A$ would not be an MST.
\end{enumerate}
So $w(a_1) = w(b_1)$.
\begin{enumerate}
\item Suppose $C_1$ contains an edge $c$ with weight greater than that
of $a_1$ and $b_1$.
Then $B\cup\{a_1\}\setminus\{c\}$
is a spanning tree with lower total edge weight than $B$,
which is a contradiction.
Thus $a_1$ and $b_1$ have the maximum weight of any edge in $C_1$.
\item Suppose $D_1$ contains  an edge $d$ with weight less than that
of $a_1$ and $b_1$.
Then $A\cup\{d\}\setminus\{a_1\}$
is a spanning tree with lower total edge weight than $A$,
which is a contradiction.
Thus $a_1$ and $b_1$ have the minimum weight of any edge in $D_1$.
\end{enumerate}

Define $T_1 = A\cup\{b_1\}\setminus\{a_1\}$.
It is a spanning tree with total edge weight equal to $A$,
so it is an MST.
If $T_1 \ne B$,
repeat the process:
select $a_2\in T_1\setminus B$
and $b_2\in B\setminus T_1$
such that $a_2$ and $b_2$ belong to
\begin{enumerate}
\item a cycle $C_2 \subset B \cup \{a_2\}$
in which they have the maximum weight of any edge; and
\item a cut $D_2 \subset E$
in which they have the minimum weight of any edge.
\end{enumerate}
Note that the edges $a_1$, $a_2$, $b_1$, and $b_2$
are all distinct.
Define $T_2 = T_1\cup\{b_2\}\setminus\{a_2\}$,
which is another MST.
If $T_2 \ne B$,
repeat the process.
If $k = |A\setminus B| = |B\setminus A|$,
then $T_k = B$.
\end{proof}

Theorem \ref{theorem:bijective} implies that any graph with multiple MSTs
must contain a cycle with two edges sharing the maximum weight.
The converse is not true,
as demonstrated in Figure \ref{figure:counter}(a).
Even if a graph contains a cycle with two edges sharing the maximum weight,
it may only have one MST.

\begin{figure}
\centering
\includegraphics[width = 0.8\linewidth]{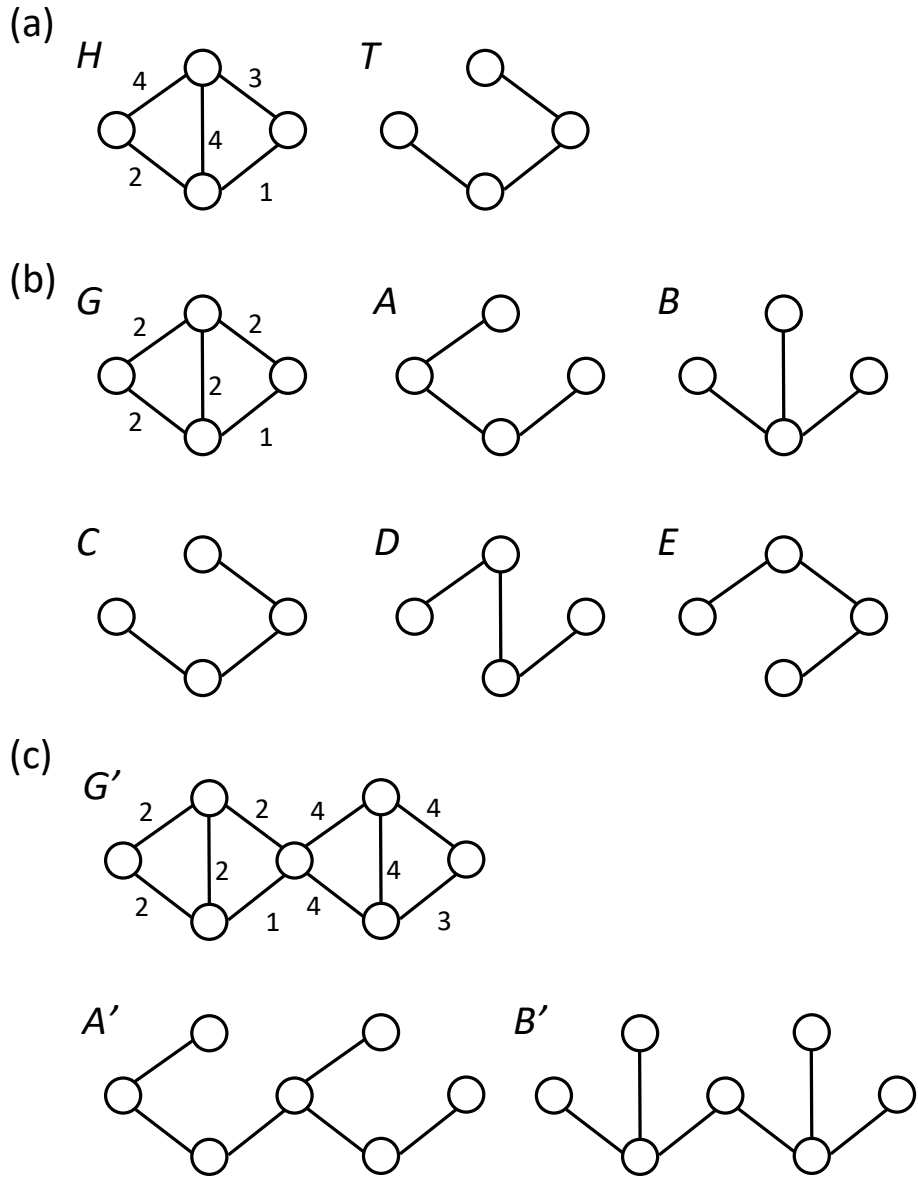}
\caption{Counterexamples.
(a) $H$ contains a cycle with two edges sharing the maximum weight,
but it only has one MST $T$.
(b) For the graph $G$,
there are $4! = 24$ different orderings of the edges,
but $5$ MSTs (labeled $A$ through $E$).
Since $24 / 5$ is not an integer,
there cannot be an equal number of orderings per MST.
In fact,
$A$ has four orderings leading to it
but each of the other MSTs has five orderings leading to it.
(c) If we chain $G$ to create $G'$,
we see that the MST $A'$ has $4\times 4 = 16$ orderings leading to it
and the MST $B'$ has $5\times 5 = 25$ orderings leading to it.
Thus the ratio of the number of orderings leading to $A'$
to the number of orderings leading to $B'$ is $16/25 = (4/5)^2$.
Chaining $G$ in this way indefinitely demonstrates that even asymptotically
the ratio of orderings leading to each MST does not approach $1$.}
\label{figure:counter}
\end{figure}

Let $G = (V,E)$ be a connected, weighted graph with $N$ vertices,
$m$ edges,
and at least two MSTs,
$A$ and $B$.
Then $m \ge N$ and
there must be at least two edges in $E$ with the same weight.
Label the edges in $E$ from lowest to highest weight
so that, if $w(e_i)$ is the weight of the $i$th edge,
$w(e_1) \le w(e_2) \le \cdots \le w(e_m)$.
For edges with the same weight,
order them arbitrarily.
Suppose there are $K \ge 1$ weights that are shared by more than one edge,
with $k_i \ge 2$ edges sharing the $i$th weight
for $i = 1,\dotsc,K$.
Then there are $\prod_{i=1}^K k_i!$ different orderings
$\sigma$ of the edges such that
$w(e_{\sigma(1)}) \le w(e_{\sigma(2)}) \le \cdots \le w(e_{\sigma(m)})$.
Note that the identity function $\sigma(i) = i$
is counted as one of these orderings.

In order to find the MST of a graph with unique
edge weights,
the actual values of the weights are not important.
It is only their order that matters.
If we think of each ordering $\sigma$
as treating edge $e_{\sigma(i)}$ as having weight
$w_\sigma\left(e_{\sigma(i)}\right) = i$,
then the edge weights are unique
and $\sigma$ results in a unique MST.
If the number of orderings leading to each MST were equal,
we could sample uniformly
from the set of MSTs of a graph
by sampling uniformly from the set of edge orderings.
Unfortunately, Figure \ref{figure:counter}(b)
demonstrates that the number of orderings leading to each MST
may not be equal.
For the graph $G$,
there are $4! = 24$ different orderings of the edges,
but $5$ MSTs (labeled $A$ through $E$).
Since $24 / 5$ is not an integer,
there cannot be an equal number of orderings per MST.
In fact,
$A$ has four orderings leading to it
but each of the other MSTs has five orderings leading to it.
It is tempting to assume that as the number of edges and/or cycles
approaches infinity,
the proportion of orderings leading to each MST
will approach the same value,
but Figure \ref{figure:counter}(c) provides a counterexample.
If we chain $G$ to create $G'$,
we see that the MST $A'$ has $4\times 4 = 16$ orderings leading to it
and the MST $B'$ has $5\times 5 = 25$ orderings leading to it.
Thus the ratio of the number of orderings leading to $A'$
to the number of orderings leading to $B'$ is $16/25 = (4/5)^2$.
As the number of instances of $G$ chained together approaches infinity,
the ratio of the number of orderings leading to the $A$ chain
to the number of orderings leading to the $B$ chain approaches 0,
not 1.
So the proportion of orderings leading to each MST
does not approach the same value
even as the number of edges and/or cycles
approaches infinity.

\section{Methods}

Other types of graphs and sampling methods are not as tractable
as complete graphs and sampling uniformly at random.
In order to estimate the probability that an edge is in the population MST
given that it is in the sample MST for more complex situations,
a simulation study was conducted.
For the simulation study,
we targeted the parameter
$E\left(\frac{|T\cap T_n|}{|T_n|} I(|T_n| > 0)\right)$.

\subsection{Simulation Study}
\label{sec:sim}

The following algorithm was repeated for $i = 1,\dotsc,1000$.
Figure \ref{figure:schematic} contains a schematic of one replication.
\begin{enumerate}
\item Generate a weighted graph $g_i$ with $N = 100$ nodes.
(More information on the type of graph is included below.)
This graph $g_i$ is the population graph.
\item Find $t_{g_i}$, the MST of $g_i$.
Thus $t_{g_i}$ is the population MST.
\item Sample $n$ nodes from $g_i$,
yielding the induced subgraph $h_i$.
(More information on the sampling process and the value of $n$
is included below.)
Thus $h_i$ is the sample graph.
\item Find $t_{h_i}$, the MST of $h_i$.
Thus $t_{h_i}$ is the sample MST.
\item Calculate the positive predictive value
\[
\text{PPV}_i = \frac{\text{\# of edges in $t_{g_i}\cap t_{h_i}$}}{\text{\# of edges in $t_{h_i}$}} \text{.}
\]
\item Repeat the following for $j = 1,\dotsc,100$:
\begin{enumerate}
\item Sample $n^2/N$ nodes from $h_i$,
yielding the induced subgraph $h_{i,j}$.
In other words, sample the same proportion of nodes from $h_i$
as were sampled from $g_i$.
Thus $h_{i,j}$ is the bootstrap sample graph.
\item Find $t_{h_{i,j}}$, the MST of $h_{i,j}$.
Thus $t_{h_{i,j}}$ is the bootstrap MST.
\item Calculate the bootstrap positive predictive value
\[
\text{BPPV}_{i,j} = \frac{\text{\# of edges in $t_{h_i}\cap t_{h_{i,j}}$}}{\text{\# of edges in $t_{h_{i,j}}$}} \text{.}
\]
\end{enumerate}
\item Calculate
\[
\overline{\text{BPPV}}_i = \frac{1}{100} \sum_{j=1}^{100} \text{BPPV}_{i,j} \text{.}
\]
\item Calculate the area under the ROC curve (AUC$_i$)
using the number of times that an edge appears in a bootstrap MST
(i.e., one of the $t_{h_{i,j}}$) as the predictor
and whether that edge appears in $t_{g_i}$ as the outcome.
\end{enumerate}
The following statistics were calculated to summarize the 1,000 replications:
\begin{align*}
\overline{\text{PPV}} &= \frac{1}{1000} \sum_{i=1}^{1000} \text{PPV}_i \\
\overline{\overline{\text{BPPV}}} &= \frac{1}{1000} \sum_{i=1}^{1000} \overline{\text{BPPV}}_i \\
\overline{\text{AUC}} &= \frac{1}{1000} \sum_{i=1}^{1000} \text{AUC}_i \text{.} \\
\end{align*}
Confidence intervals were calculated as follows:
\begin{align*}
\overline{\text{PPV}} &\pm z_{0.975} \sqrt{\frac{1}{1000} \left(\frac{1}{999}\sum_{i=1}^{1000} \left(\text{PPV}_i - \overline{\text{PPV}}\right)^2\right)} \\
\overline{\overline{\text{BPPV}}} &\pm z_{0.975} \sqrt{\frac{1}{1000} \left(\frac{1}{999}\sum_{i=1}^{1000} \left(\overline{\text{BPPV}}_i - \overline{\overline{\text{BPPV}}}\right)^2\right)} \\
\overline{\text{AUC}} &\pm z_{0.975} \sqrt{\frac{1}{1000} \left(\frac{1}{999} \sum_{i=1}^{1000} \left(\text{AUC}_i - \overline{\text{AUC}}\right)^2\right)} \text{.} \\
\end{align*}

\begin{figure}
\centering
\includegraphics[width = 0.8\linewidth]{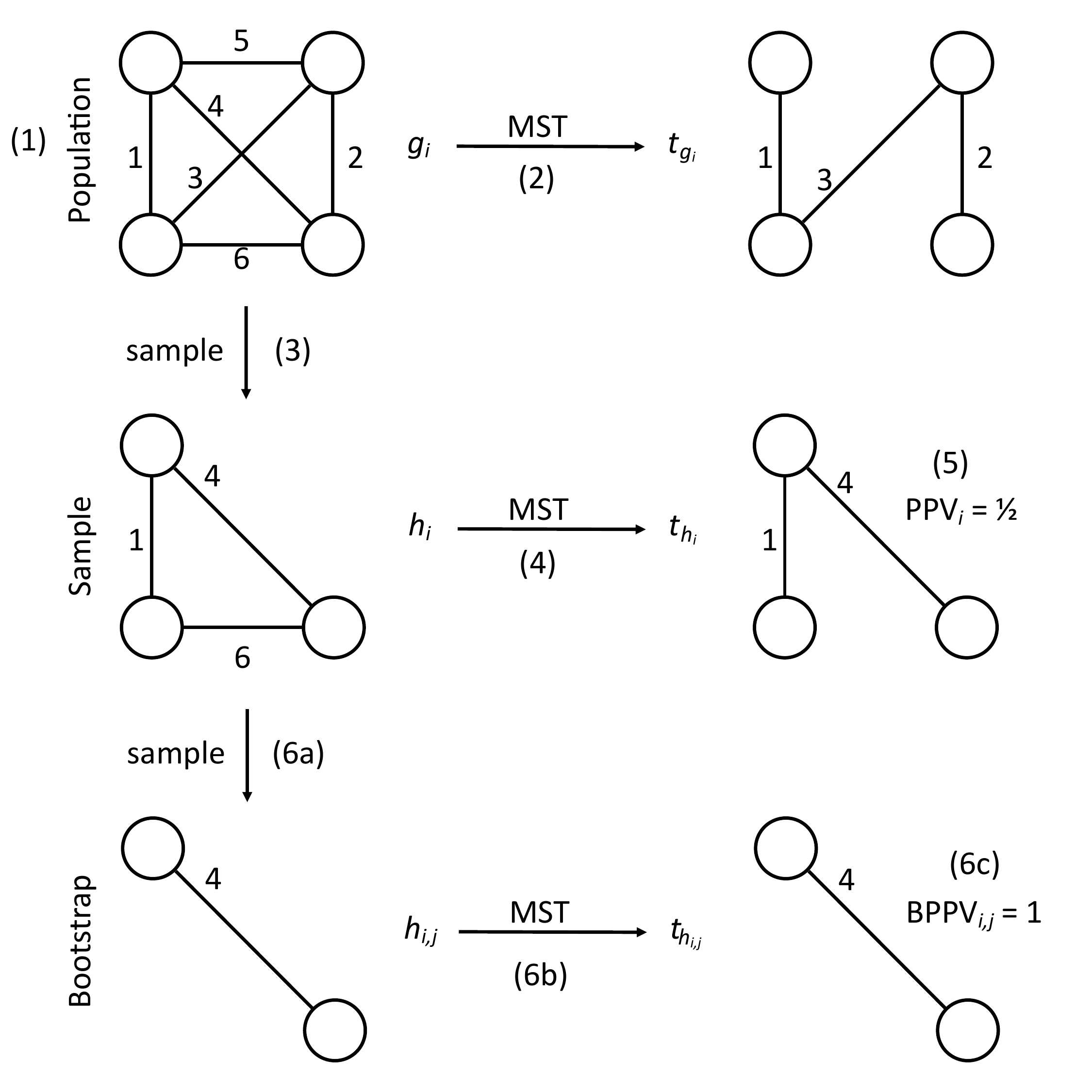}
\caption{Illustration of one replication.}
\label{figure:schematic}
\end{figure}

An entire simulation, with 1,000 replications, was run for each
of the following types of graphs (with $N = 100$ nodes):
\begin{enumerate}
\item Complete: Complete graph with weights uniformly distributed on $(0,1)$.
\item $G\left(N,\frac{1}{2}\right)$: First, a complete graph was generated
with weights uniformly distributed on $(0,1)$.
Then, each edge was included in the final graph with probability $\frac{1}{2}$.
\item Normal: Vertices were distributed in $\mathbb{R}^2$
according to a bivariate standard normal distribution,
and the weight of an edge connecting two vertices was equal to the Euclidean
distance between them.
\item Barab\'{a}si–Albert: Barab\'{a}si–Albert (BA) graph
with each new node attaching to three existing nodes and
with weights uniformly distributed on $(0,1)$.
\end{enumerate}
For each replication,
$n$ was set to $25$, $50$, and $75$,
and for each replication and each value of $n$,
the following types of sampling were used:
\begin{enumerate}
\item Uniform: Nodes were sampled uniformly at random.
\item Near: For complete graphs,
node $i$'s probability of being sampled was proportional to
$\max \{s_1,\dotsc,s_{N}\} - s_i + \min \{s_1,\dotsc,s_{N}\}$,
where $s_i$ is the total weight of all edges adjacent to node $i$.
This simulates preferentially selecting nodes that are close to other nodes,
while ensuring that every node has positive probability of being selected.
For non-complete graphs,
node $i$'s probability of being sampled was proportional to
$d_i$ or $d_i + 1$,
where $d_i$ is the degree of node $i$,
if the minimum degree was positive or zero,
respectively.
This simulates preferentially selecting nodes with many neighbors,
while ensuring that every node has positive probability of being selected.
\item Far: For complete graphs,
node $i$'s probability of being sampled was proportional to
$s_i$.
This simulates preferentially selecting nodes that are far from other nodes.
For non-complete graphs,
node $i$'s probability of being sampled was proportional to
$\max \{d_1,\dotsc,d_{N}\} - d_i + \max\{1,\min \{d_1,\dotsc,d_{N}\}\}$.
This simulates preferentially selecting nodes with few neighbors,
while ensuring that every node has positive probability of being selected.
\item Random Walk:
The following algorithm was repeated until $n$ nodes were recorded
in the vector $v$:
A node was selected uniformly at random 
from nodes not already in $v$ and recorded.
Suppose it was node $i$.
If node $i$ had no neighbors,
the process was restarted.
(Note that node $i$ is not thrown out if it has no neighbors;
even isolated nodes can be included in $v$.)
If node $i$ had neighbors,
with labels $j_1,\dotsc,j_{d_i}$,
one of these neighbors was selected at random to be the next recorded node.
The probabilities were not uniform;
for complete and non-complete graphs,
node $j_1$'s probability of being selected was proportional to
$\max \{s_{j_1},\dotsc,s_{j_{d_i}}\} - s_{j_1} + \min \{s_{j_1},\dotsc,s_{j_{d_i}}\}$.
This simulates preferentially selecting a neighbor that is close
to the current node,
while ensuring that every neighbor has positive probability of begin selected.
\end{enumerate}

One additional sampling method was used only for the ``normal'' graph.
For each replication,
all nodes in the first quadrant were sampled
(i.e., nodes with $x$ and $y$ coordinates greater than or equal to $0$);
then, all nodes in the first and second quadrant were sampled
(i.e., nodes with $x$ coordinate greater than or equal to $0$);
finally, all nodes in the first, second, and fourth quadrant were sampled
(i.e., nodes with $x$ or $y$ coordinate greater than or equal to $0$).
Note that for each replication,
approximately (but not necessarily exactly) $25$, $50$, and $75$ nodes
are sampled.
No bootstrapping was performed for this sampling method.

\subsection{HIV Genetic Distance Network}

Infectious disease researchers have used MSTs
to infer the transmission pathway of pathogens
\cite{campbell2017,spada2004}.
They typically sequence a portion of the pathogen's genome from
human tissue samples;
calculate distances between samples based on those sequences;
construct networks in which each sample is a node and two nodes
are connected by an edge if the distance between them is below a specified
cut-off;
weight each edge by the distance between the two nodes;
and then construct an MST from this weighted graph.
The MST is a natural starting point when trying to determine the transmission
pathway of the pathogen.
The researchers are usually only interested in
the first time a person was infected, so they want to eliminate cycles,
and the person most likely to have infected a given individual
is assumed to be
whoever has the pathogen with the most similar genetic makeup.

The Primary Infection Research Consortium at UC San Diego
(PIRC) \cite{le2013,morris2010}
provided an edgelist for an HIV genetic distance network.
Each year, the PIRC recruits up to 100 people who are newly diagnosed with HIV.
Both specimens and clinical data are collected upon recruitment
and then at regular intervals thereafter.
Participants with chronic HIV infection are followed for twelve weeks
and participants with acute HIV infection are followed for several years.

Each of the 1,234 nodes in the edgelist corresponded to an HIV sample;
edge weights were genetic distances
calculated using the HIV-TRACE method \cite{kosakovsky2018}.
This method aligns a sample sequence to a reference sequence
and then calculates distances between each pair of sample sequences.
As in \cite{campbell2017} and \cite{little2014},
edges with distances greater than 1.5\% were deleted.
As a result,
nodes that were greater than 1.5\% distance from all other nodes
became isolated, and were removed.
This yielded a graph with 588 nodes, 984 edges,
and 171 components.
Figure \ref{figure:components} displays the number of components of
each size.

\begin{figure}
\centering
\includegraphics[width = 0.8\linewidth]{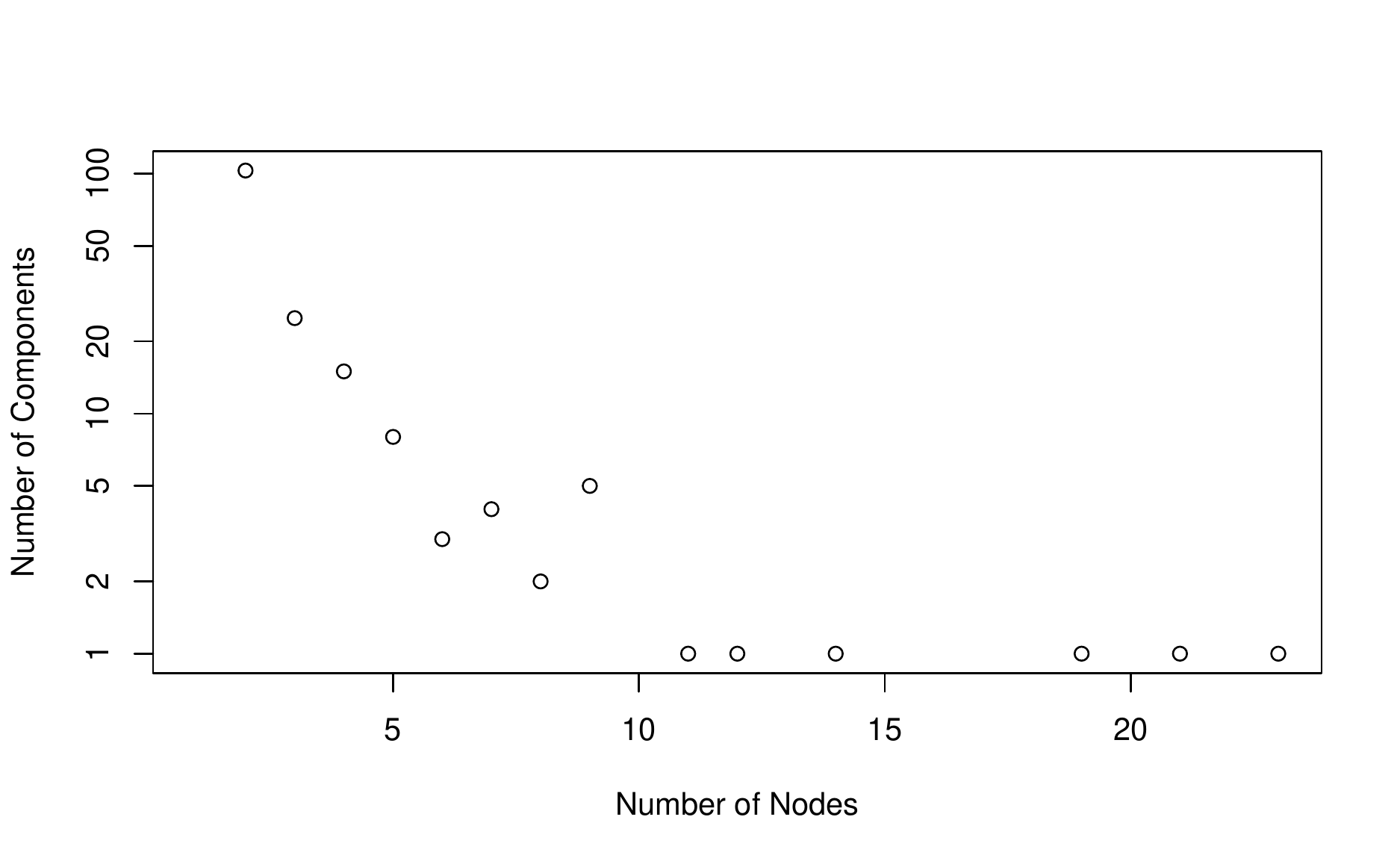}
\caption{The number of components of each size in the HIV graph.}
\label{figure:components}
\end{figure}

Fifty-three edges had a weight of 0.
Unfortunately, due to uncertainty in the distance estimation process
\cite{tamura1993},
even if the edges between samples A and B and between B and C
both had weights of 0,
the edge between samples A and C was not always 0.
Thus, weights of 0 were set to one-half the minimum positive edge weight.
Regarding the edge weights,
746 were unique,
69 were shared by two edges,
11 were shared by three edges,
two were shared by four edges,
one was shared by six edges,
and one was shared by fifty-three edges.
Figure \ref{figure:hist} displays the number of edges
that had each edge weight.
The minimum positive difference between any two edge weights was $10^{-9}$.

\begin{figure}
\centering
\includegraphics[width = 0.8\linewidth]{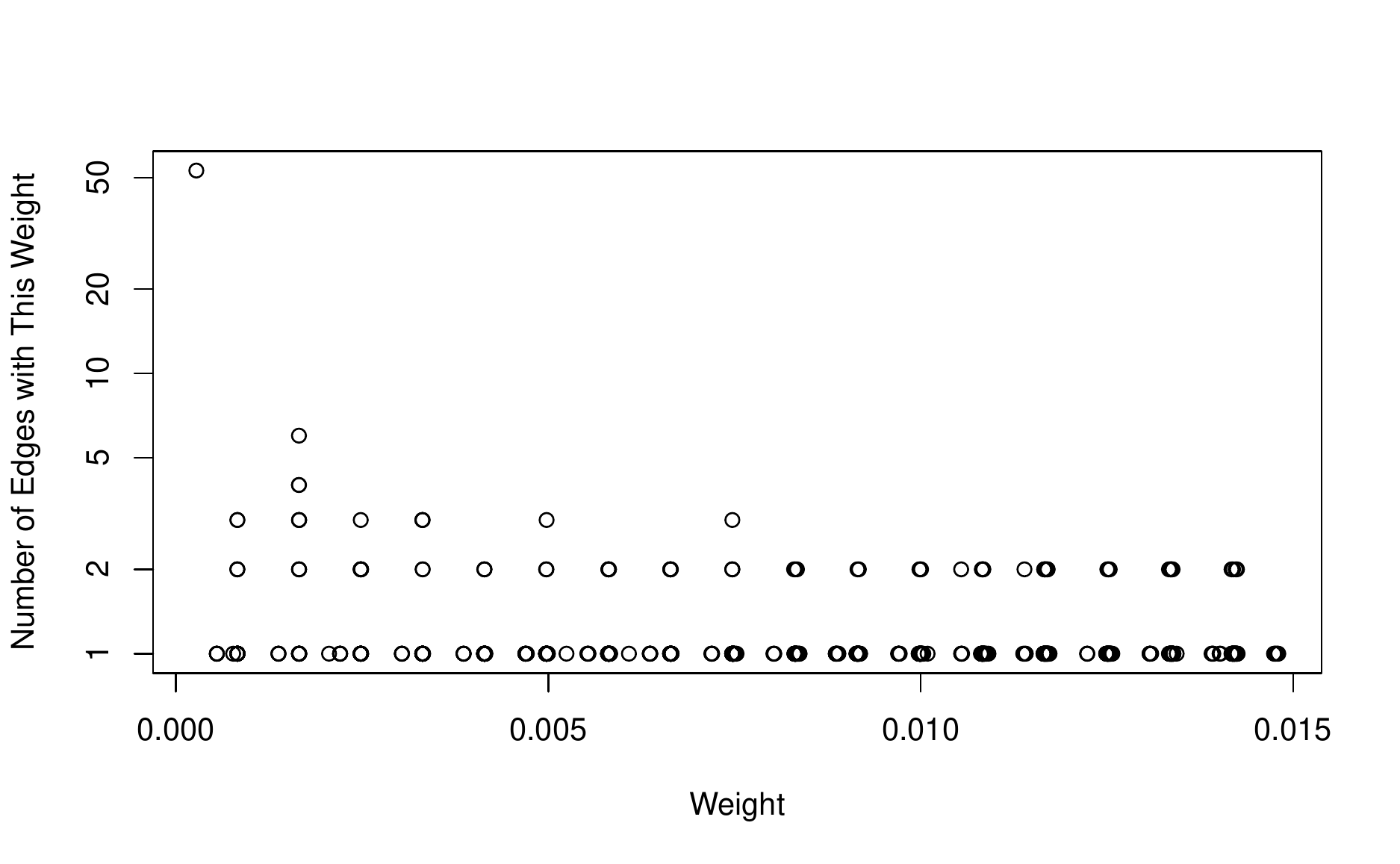}
\caption{Number of edges with each edge weight in the HIV graph.}
\label{figure:hist}
\end{figure}

In order to ensure unique edge weights,
one of the orderings described in the Theory section
was selected uniformly at random
and used throughout analysis.
In other words,
the edges were ordered from smallest to greatest weight,
with ties broken arbitrarily.

The same algorithm described in the Simulation Study subsection
was used to analyze the empirical data,
with the following modifications:
\begin{itemize}
\item For each of the 1,000 replications,
$g_i = g$ was the empirical HIV genetic distance network.
That is, the same graph was used each time.
\item In order to sample the same proportion of nodes as in the simulation
study (25\%, 50\%, and 75\%),
for each replication,
$n$ was set to $147$, $294$, and $441$.
\end{itemize}
Figure \ref{figure:hiv} displays
the entire HIV genetic distance network, its MST,
a subgraph induced by sampling 50\% of the nodes uniformly at random,
and the MST of the induced subgraph.

\begin{figure}
\centering
\includegraphics[width = 0.8\linewidth]{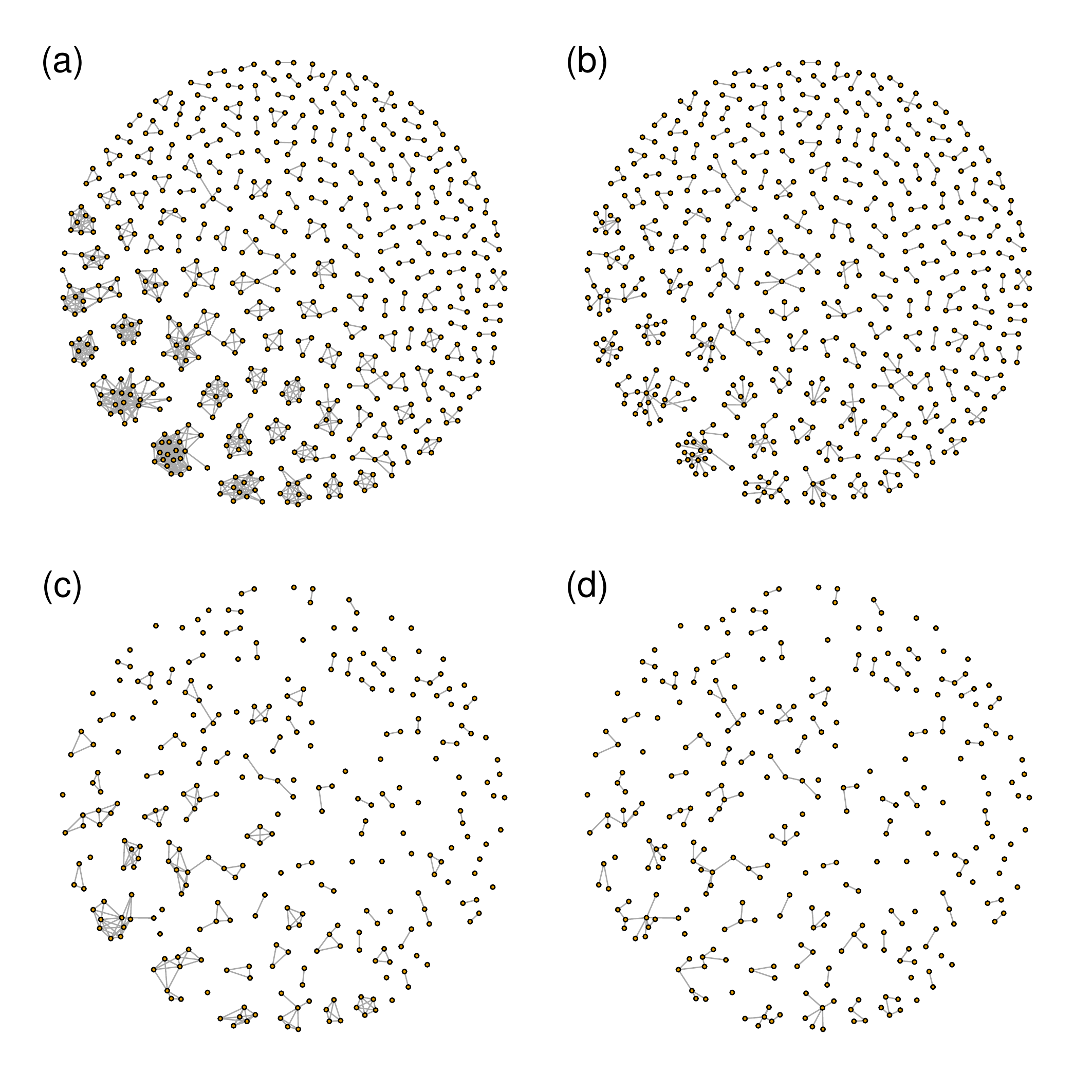}
\caption{(a) The HIV genetic distance network.
It has 588 nodes, 984 edges, and 171 components.
(b) The population MST of the HIV genetic distance network.
(c) A subgraph induced by sampling 50\% of the nodes in the HIV genetic distance network uniformly at random.
(d) The sample MST of the induced subgraph.
For this sample,
the PPV is 0.737.}
\label{figure:hiv}
\end{figure}

The PIRC also provided three-digit zip codes
for 564 of the 588 nodes in the graph.
For each of the three zip codes with the most nodes,
an MST was created using only nodes from that zip code,
and the proportion of edges in each MST that were also in the population MST
was calculated.
These three zip codes accounted for 546 nodes,
or 92.9\% of the nodes in the graph.
The next-most-represented zip code had only seven nodes,
or 1.2\% of the graph.

All graphs were undirected.
All simulations were run in R version 3.6.1,
using the package igraph \cite{csardi2006},
on the O2 High Performance Compute Cluster,
supported by the Research Computing Group, at Harvard Medical School.
See http://rc.hms.harvard.edu for more information.
The package igraph uses Prim's algorithm
\cite{prim1957} to find the MST.
Code is available at https://github.com/onnela-lab/mst.

The PIRC was approved by the University of 
California--San Diego's Human Research Protection Program
(Project \#140585 and Project \#191088).
The current study was determined to be not human subjects research
by the IRB of the Harvard T.H. Chan School of Public Health
(Protocol \# IRB19-2166).

\section{Results}

Results for the simulation study and empirical data
are in Tables \ref{table:theta} and \ref{table:zip}.
The results for quadrant sampling of normal graphs
are as follows:
for $n = 25$, $\overline{\text{PPV}} = 0.246$
(95\% CI 0.240-0.252);
for $n = 50$, $\overline{\text{PPV}} = 0.497$
(95\% CI 0.492-0.503);
and for $n = 75$, $\overline{\text{PPV}} = 0.747$
(95\% CI 0.743-0.752).

\newgeometry{left=0.5in,right=0.5in,bottom=0.5in,top=0.5in}
\begin{table}
\centering
\begin{tabular}{cllcccc}
\toprule
 & & & \multicolumn{4}{c}{\textbf{Type of Sampling}} \\
\textbf{Graph} & $\bm{n}$ & \textbf{Statistic} & \textbf{Uniform} & \textbf{Near} & \textbf{Far} & \textbf{Random Walk} \\
\midrule
\multirow{9}{*}{\begin{turn}{90}
Complete
\end{turn}} & $25$ & $\overline{\text{PPV}}$ & 0.245 (0.240-0.251) & 0.255 (0.249-0.261) & 0.247 (0.241-0.252) & 0.268 (0.262-0.274) \\
& & $\overline{\overline{\text{BPPV}}}$ & 0.240 (0.239-0.241) & 0.251 (0.250-0.252) & 0.229 (0.228-0.230) & 0.300 (0.298-0.301) \\
& & $\overline{\text{AUC}}$ & 0.862 (0.858-0.865) & 0.871 (0.868-0.874) & 0.842 (0.838-0.845) & 0.908 (0.906-0.910) \\
& $50$ & $\overline{\text{PPV}}$ & 0.503 (0.499-0.508) & 0.508 (0.504-0.513) & 0.494 (0.490-0.499) & 0.513 (0.509-0.518) \\
& & $\overline{\overline{\text{BPPV}}}$ & 0.500 (0.500-0.501) & 0.510 (0.509-0.510) & 0.491 (0.491-0.492) & 0.523 (0.522-0.523) \\
& & $\overline{\text{AUC}}$ & 0.983 (0.983-0.983) & 0.984 (0.983-0.984) & 0.981 (0.981-0.982) & 0.984 (0.984-0.984) \\
& $75$ & $\overline{\text{PPV}}$ & 0.748 (0.745-0.751) & 0.754 (0.751-0.757) & 0.744 (0.740-0.747) & 0.761 (0.758-0.764) \\
& & $\overline{\overline{\text{BPPV}}}$ & 0.747 (0.746-0.747) & 0.753 (0.753-0.754) & 0.740 (0.740-0.741) & 0.756 (0.756-0.757) \\
& & $\overline{\text{AUC}}$ & 0.996 (0.996-0.996) & 0.996 (0.996-0.997) & 0.996 (0.996-0.996) & 0.997 (0.996-0.997) \\
\hline
\multirow{9}{*}{\begin{turn}{90}
$G(n,p)$
\end{turn}} & $25$ & $\overline{\text{PPV}}$ & 0.255 (0.250-0.261) & 0.254 (0.248-0.260) & 0.245 (0.239-0.250) & 0.298 (0.293-0.304) \\
& & $\overline{\overline{\text{BPPV}}}$ & 0.249 (0.248-0.251) & 0.258 (0.257-0.259) & 0.242 (0.241-0.244) & 0.397 (0.395-0.399) \\
& & $\overline{\text{AUC}}$ & 0.728 (0.722-0.734) & 0.744 (0.738-0.750) & 0.683 (0.676-0.690) & 0.892 (0.889-0.894) \\
& $50$ & $\overline{\text{PPV}}$ & 0.497 (0.493-0.502) & 0.504 (0.500-0.509) & 0.495 (0.491-0.500) & 0.538 (0.533-0.542) \\
& & $\overline{\overline{\text{BPPV}}}$ & 0.500 (0.499-0.500) & 0.511 (0.510-0.511) & 0.490 (0.489-0.490) & 0.564 (0.563-0.564) \\
& & $\overline{\text{AUC}}$ & 0.965 (0.964-0.965) & 0.965 (0.964-0.965) & 0.959 (0.959-0.960) & 0.973 (0.972-0.973) \\
& $75$ & $\overline{\text{PPV}}$ & 0.752 (0.748-0.755) & 0.756 (0.753-0.759) & 0.742 (0.738-0.745) & 0.772 (0.769-0.776) \\
& & $\overline{\overline{\text{BPPV}}}$ & 0.747 (0.746-0.747) & 0.754 (0.754-0.755) & 0.740 (0.739-0.740) & 0.774 (0.774-0.775) \\
& & $\overline{\text{AUC}}$ & 0.993 (0.992-0.993) & 0.993 (0.992-0.993) & 0.992 (0.992-0.992) & 0.993 (0.993-0.994) \\
\hline
\multirow{9}{*}{\begin{turn}{90}
Normal
\end{turn}} & $25$ & $\overline{\text{PPV}}$ & 0.246 (0.241-0.252) & 0.254 (0.249-0.260) & 0.251 (0.245-0.256) & 0.267 (0.261-0.272) \\
& & $\overline{\overline{\text{BPPV}}}$ & 0.240 (0.239-0.241) & 0.251 (0.249-0.252) & 0.228 (0.227-0.230) & 0.300 (0.299-0.302) \\
& & $\overline{\text{AUC}}$ & 0.864 (0.861-0.868) & 0.873 (0.870-0.876) & 0.842 (0.839-0.846) & 0.906 (0.904-0.909) \\
& $50$ & $\overline{\text{PPV}}$ & 0.497 (0.492-0.502) & 0.503 (0.498-0.507) & 0.494 (0.489-0.498) & 0.512 (0.508-0.517) \\
& & $\overline{\overline{\text{BPPV}}}$ & 0.500 (0.499-0.500) & 0.509 (0.509-0.510) & 0.491 (0.490-0.491) & 0.522 (0.522-0.523) \\
& & $\overline{\text{AUC}}$ & 0.983 (0.983-0.983) & 0.983 (0.983-0.984) & 0.981 (0.981-0.981) & 0.984 (0.984-0.984) \\
& $75$ & $\overline{\text{PPV}}$ & 0.748 (0.744-0.751) & 0.755 (0.751-0.758) & 0.745 (0.741-0.748) & 0.758 (0.755-0.762) \\
& & $\overline{\overline{\text{BPPV}}}$ & 0.747 (0.746-0.747) & 0.753 (0.753-0.754) & 0.740 (0.740-0.741) & 0.756 (0.756-0.757) \\
& & $\overline{\text{AUC}}$ & 0.996 (0.996-0.996) & 0.996 (0.996-0.997) & 0.996 (0.996-0.996) & 0.997 (0.996-0.997) \\
\hline
\multirow{9}{*}{\begin{turn}{90}
Barab\'{a}si–Albert
\end{turn}} & $25$ & $\overline{\text{PPV}}$ & 0.389 (0.381-0.396) & 0.459 (0.453-0.466) & 0.377 (0.368-0.385) & 0.704 (0.698-0.709) \\
& & $\overline{\overline{\text{BPPV}}}$ & 0.905 (0.899-0.910) & 0.639 (0.633-0.646) & 0.974 (0.971-0.977) & 0.746 (0.743-0.749) \\
& & $\overline{\text{AUC}}$ & 0.510 (0.500-0.520) & 0.511 (0.504-0.518) & 0.523 (0.511-0.534) & 0.872 (0.869-0.874) \\
& $50$ & $\overline{\text{PPV}}$ & 0.538 (0.532-0.543) & 0.701 (0.697-0.706) & 0.465 (0.460-0.470) & 0.852 (0.849-0.855) \\
& & $\overline{\overline{\text{BPPV}}}$ & 0.722 (0.718-0.726) & 0.734 (0.732-0.736) & 0.830 (0.825-0.835) & 0.838 (0.837-0.839) \\
& & $\overline{\text{AUC}}$ & 0.684 (0.679-0.689) & 0.849 (0.847-0.852) & 0.554 (0.549-0.559) & 0.946 (0.945-0.947) \\
& $75$ & $\overline{\text{PPV}}$ & 0.755 (0.751-0.759) & 0.880 (0.878-0.883) & 0.649 (0.645-0.654) & 0.943 (0.941-0.945) \\
& & $\overline{\overline{\text{BPPV}}}$ & 0.775 (0.775-0.776) & 0.888 (0.887-0.889) & 0.707 (0.705-0.709) & 0.932 (0.932-0.933) \\
& & $\overline{\text{AUC}}$ & 0.912 (0.910-0.915) & 0.969 (0.968-0.969) & 0.719 (0.715-0.723) & 0.984 (0.984-0.985) \\
\hline
\multirow{9}{*}{\begin{turn}{90}
HIV Network
\end{turn}} & $147$ & $\overline{\text{PPV}}$ & 0.578 (0.572-0.583) & 0.688 (0.684-0.693) & 0.635 (0.630-0.641) & 0.990 (0.989-0.991) \\
& & $\overline{\overline{\text{BPPV}}}$ & 0.800 (0.796-0.804) & 0.653 (0.651-0.655) & 0.937 (0.935-0.940) & 0.975 (0.974-0.975) \\
& & $\overline{\text{AUC}}$ & 0.598 (0.593-0.603) & 0.786 (0.784-0.789) & 0.626 (0.620-0.632) & 0.997 (0.997-0.998) \\
& $294$ & $\overline{\text{PPV}}$ & 0.727 (0.724-0.730) & 0.888 (0.886-0.890) & 0.729 (0.726-0.731) & 0.995 (0.995-0.995) \\
& & $\overline{\overline{\text{BPPV}}}$ & 0.795 (0.794-0.796) & 0.847 (0.847-0.848) & 0.871 (0.870-0.873) & 0.992 (0.992-0.992) \\
& & $\overline{\text{AUC}}$ & 0.855 (0.853-0.857) & 0.963 (0.963-0.964) & 0.788 (0.786-0.790) & 0.999 (0.998-0.999) \\
& $441$ & $\overline{\text{PPV}}$ & 0.868 (0.866-0.869) & 0.972 (0.972-0.973) & 0.836 (0.834-0.838) & 0.998 (0.998-0.998) \\
& & $\overline{\overline{\text{BPPV}}}$ & 0.880 (0.879-0.880) & 0.960 (0.959-0.960) & 0.872 (0.871-0.873) & 0.997 (0.997-0.997) \\
& & $\overline{\text{AUC}}$ & 0.944 (0.943-0.944) & 0.997 (0.997-0.997) & 0.919 (0.919-0.920) & 0.999 (0.999-0.999) \\
\bottomrule
\end{tabular}
\caption{\label{table:theta}
Results for complete, $G(n,p)$,
normal (excluding quadrant sampling),
Barab\'{a}si–Albert,
and HIV genetic distance network graphs.
Data are presented as mean (95\% CI).
Each simulated graph had 100 nodes.
The HIV genetic distance network is from
the Primary Infection Resource Consortium \cite{le2013,morris2010}.
PPV = positive predictive value;
BPPV = bootstrap positive predictive value;
AUC = area under the receiver operating characteristic curve.}
\end{table}

\newgeometry{left=1in,right=1in,bottom=1in,top=1in}

\begin{table}
\centering
\begin{tabular}{rrrr}
\toprule
\multicolumn{1}{c}{\textbf{Zip Code}} & \multicolumn{1}{c}{\textbf{\# of Nodes}} & \multicolumn{1}{c}{\textbf{\% of Total Nodes}} & \multicolumn{1}{c}{\textbf{\% of Sample MST Edges in Population MST}} \\
\midrule
921 & 433 & 73.6\% & 87.5\% \\
920 & 63 & 10.7\% & 92.9\% \\
919 & 50 & 8.5\% & 44.4\% \\
\bottomrule
\end{tabular}
\caption{\label{table:zip}
Results from creating MSTs from nodes belonging to each
of the most-represented zip codes in the data.}
\end{table}

\doublespacing

For complete, $G(n,p)$, and normal graphs,
$\overline{\text{PPV}} \approx \frac{n}{N}$
when nodes are sampled uniformly at random;
$\overline{\text{PPV}} > \frac{n}{N}$
when nodes that have many neighbors or low total edge weight are
preferentially sampled,
or when an edge-weighted random walk is used to sample nodes; and
$\overline{\text{PPV}} < \frac{n}{N}$
when nodes that have few neighbors or high total edge weight are
preferentially sampled.
For normal graphs,
$\overline{\text{PPV}} \approx \frac{n}{N}$
when nodes are sampled by quadrant.

For BA graphs and uniform sampling,
$\overline{\text{PPV}} > \frac{n}{N}$.
``Near'' sampling increases $\overline{\text{PPV}}$
whereas ``far'' sampling decreases it.
The random walk produces the highest values of $\overline{\text{PPV}}$
of any of the sampling methods.
The simulations using the PIRC data have similar results to the BA graphs
but with even higher values of $\overline{\text{PPV}}$.

Across graph types and sampling methods,
$\overline{\overline{\text{BPPV}}}$ does not have
a consistent relationship with
$\overline{\text{PPV}}$.
Sometimes they have overlapping confidence intervals,
sometimes $\overline{\overline{\text{BPPV}}} > \overline{\text{PPV}}$,
and sometimes $\overline{\overline{\text{BPPV}}} < \overline{\text{PPV}}$;
it is hard to generalize about when each scenario arises.
That said,
$\overline{\overline{\text{BPPV}}}$ is closer to
$\overline{\text{PPV}}$ at higher sample sizes,
for all graphs and all types of sampling.

For complete, $G(n,p)$, and normal graphs,
$\overline{\text{AUC}}$ is almost always above $0.75$,
with many values above $0.90$.
For BA graphs and the PIRC graph,
$\overline{\text{AUC}}$ is still always above $0.50$,
with many values above $0.90$.

When sampling by zip code,
the percentage of sample MST edges that are also in the population MST
is far greater than the percentage of nodes sampled.

\section{Discussion}
\label{sec:discussion}

In spite of the wide use of the MST on sample networks,
little was known about what could be inferred from it
about the MST of population networks.
This study examined exactly that.
Returning to the questions posed in the Introduction,
we can say the following:
\begin{enumerate}
\item Given that an edge is in the sample graph
but not the sample MST,
what is the probability that it is not in the population MST?

This probability is $1$,
regardless of the type of graph or sampling method,
provided the edge weights are unique.
\item Given that an edge appears in the sample MST,
what is the probability that it appears in the population MST?

This depends on the number of nodes sampled,
the type of graph, and the type of sampling.
This conditional probability is maximized by increasing the sample size;
starting with an underlying BA graph;
and either preferentially sampling nodes that are ``near'' other nodes
or using an edge-weighted random walk.
Of course, applied researchers will not be able to choose their underlying
graph type or tell which nodes
are high degree or have low total edge weight before sampling.
Thus, an edge-weighted random walk may be their best bet.
The probability that an edge appears in the population MST given that
it is in the sample MST is minimized by decreasing $n$;
starting with an underlying complete, $G(n,p)$, or normal graph;
and preferentially sampling nodes that are ``far'' from other nodes.

Fortunately, applied researchers may already be using the edge-weighted
random walk.
In that sampling method,
the neighbor of a selected node is more likely to be
selected as well if their viral genome is closer to the first node.
In the real world,
this is achieved by contact tracing or partner notification.
For example, if someone tests positive for HIV,
efforts are made to test anyone they recently had sexual contact with
or shared a needle with.
The idea is to find those individuals who either transmitted the pathogen
to the original patient or who contracted the disease from the original
patient.
Both of these groups of people are likely to be carrying pathogens
that are genetically similar to the pathogens carried by the original patient.
\item Can this probability be estimated from the sample graph
using bootstrapping?

This depends on the number of nodes sampled,
the type of graph, and the sampling method.
No general recommendation can be made, unfortunately.
\item Can we use bootstrapping to increase our chances of identifying
edges in the population MST?

There is a strong relationship between the number of times
a sampled edge appears in a bootstrapped MST and whether or not
it is in the population MST.
However, it is as yet unclear how to capitalize on this relationship.
More research is needed.
\end{enumerate}

The results for complete, $G(n,p)$ and normal graphs were very similar
to each other and differed from the results for BA graphs.
This makes sense because sampling nodes uniformly
from each of the first three graph types
yields a graph of a similar type;
in contrast, sampling nodes uniformly from a BA graph
does not yield a BA graph
\cite{stumpf2005}.
The results for the $G(n,p)$ graph differed somewhat from the results
for the complete and normal graph when a random walk was used to sample nodes.
For this sampling method, the average PPV was higher for $G(n,p)$
than for complete and normal graphs.
This may indicate that an edge-weighted random walk
leads to an increased PPV when the underlying graph is not complete,
i.e., when not all possible edges are present.
The results for the PIRC data were similar to the results for the BA graphs.
This makes sense given the similarity in degree distribution
(see Figure \ref{figure:dd}).

\begin{figure}
\centering
\includegraphics[width = 0.8\linewidth]{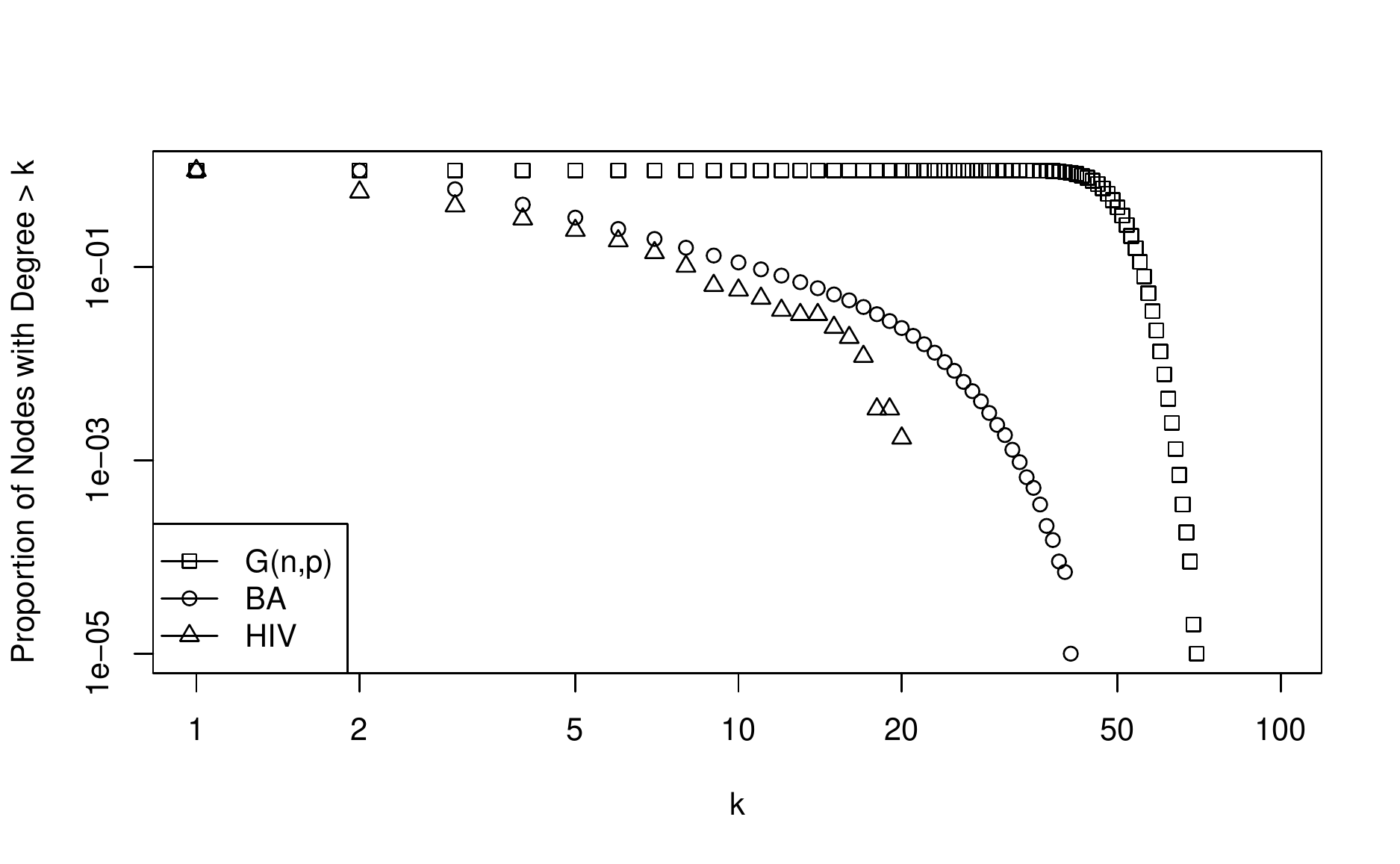}
\caption{Degree distributions for the
$G(n,p)$, Barab\'{a}si–Albert, and PIRC graphs.
Values for the $G(n,p)$
and Barab\'{a}si–Albert graphs are averages across 1,000 replications.
Values for the complete and normal graphs are not shown
because in those graphs, each node has degree $n - 1$.}
\label{figure:dd}
\end{figure}

When sampling by zip code,
the proportion of edges in the sample MST that are also in the population MST
is much, much higher than $\frac{n}{N}$.
This is good news for applied researchers,
because it implies that sampling can be limited to a single geographic area
and still identify most of the edges that are in the population MST.
It is interesting to note that this contrasts with the location-based
sampling that was performed with the simulated normal graphs.
With the normal graphs, sampling by quadrant yielded
conditional probabilities approximately equal to $\frac{n}{N}$.

One limitation of this study is that the PIRC data have non-unique
edge weights,
meaning the MST may not be unique.
Future studies can examine whether the number of times an edge
appears in sample MSTs is indicative of the number of times
it appears in the population MSTs.
Further research could also examine the impact of measuring edge weights
with error.

\section{Declarations}

\subsection{Acknowledgments}

The authors would like to thank Susan Little,
Christy Anderson, Martin Furey, Felix Torres,
Sergei Kosakovsky Pond, and the Primary Infection Resource Consortium
for sharing and explaining the empirical data.
The authors would also like to thank Rui Wang, Alessandro Vespignani, and
Edoardo Airoldi for their helpful suggestions.

\subsection{Funding}

Jonathan Larson is supported by NIH T32 AI007358.
Jukka-Pekka Onnela is supported by NIAID R01 AI138901.

\subsection{Affiliations}

Jonathan Larson is a student and Jukka-Pekka Onnela is an Associate Professor
in the Department of Biostatistics
at Harvard T.H. Chan School of Public Health.

\subsection{Author Contributions}

J.L. designed the research, performed the research,
and analyzed the data.
J.P.O. supervised the research.
J.L. and J.P.O. wrote the paper.

\subsection{Competing Interests}

The authors declare no competing interests.

\bibliographystyle{vancouver}
\bibliography{bib}

\begin{thebibliography}{10}

\bibitem{nesetril2001}
Ne\v{s}et\v{r}il J, Milkov\'{a} E, Ne\v{s}et\v{r}ilov\'{a} H.
\newblock Otakar Bor\r{u}vka on minimum spanning tree problem Translation of
  both the 1926 papers, comments, history.
\newblock Discrete Mathematics. 2001;233(1):3--36.

\bibitem{prim1957}
Prim RC.
\newblock Shortest Connection Networks And Some Generalizations.
\newblock Bell System Technical Journal. 1957;36(6):1389--1401.
\newblock Available from:
  \url{https://onlinelibrary.wiley.com/doi/abs/10.1002/j.1538-7305.1957.tb01515.x}.

\bibitem{kruskal1956}
Kruskal JB.
\newblock On the Shortest Spanning Subtree of a Graph and the Traveling
  Salesman Problem.
\newblock Proceedings of the American Mathematical Society. 1956;7(1):48--50.
\newblock Available from: \url{http://www.jstor.org/stable/2033241}.

\bibitem{tewarie2015}
Tewarie P, {van Dellen} E, Hillebrand A, Stam CJ.
\newblock The minimum spanning tree: An unbiased method for brain network
  analysis.
\newblock NeuroImage. 2015;104:177--188.
\newblock Available from:
  \url{https://www.sciencedirect.com/science/article/pii/S1053811914008398}.

\bibitem{vandellen2018}
van Dellen E, Sommer IE, Bohlken MM, Tewarie P, Draaisma L, Zalesky A, et~al.
\newblock Minimum spanning tree analysis of the human connectome.
\newblock Human Brain Mapping. 2018;39(6):2455--2471.
\newblock Available from:
  \url{https://onlinelibrary.wiley.com/doi/abs/10.1002/hbm.24014}.

\bibitem{wang2020}
{Wang} B, {Chen} Y, {Liu} W, {Qin} J, {Du} Y, {Han} G, et~al.
\newblock Real-time hierarchical supervoxel segmentation via a minimum spanning
  tree.
\newblock IEEE Transactions on Image Processing. 2020;29:9665--9677.

\bibitem{jin2020}
Jin Y, Zhao H, Gu F, Bu P, Na M.
\newblock A spatial minimum spanning tree filter.
\newblock Measurement Science and Technology. 2020 jan;32(1):015204.
\newblock Available from: \url{https://doi.org/10.1088/1361-6501/abaa65}.

\bibitem{wu2018}
Wu B, Yu B, Wu Q, Chen Z, Yao S, Huang Y, et~al.
\newblock An extended minimum spanning tree method for characterizing local
  urban patterns.
\newblock International Journal of Geographical Information Science.
  2018;32(3):450--475.

\bibitem{mantegna1999}
Mantegna RN.
\newblock Hierarchical structure in financial markets.
\newblock The European Physical Journal B - Condensed Matter and Complex
  Systems. 1999;11(1):193--197.

\bibitem{onnela2002}
Onnela JP, Chakraborti A, Kaski K, Kerti\'{e}sz J.
\newblock Dynamic asset trees and portfolio analysis.
\newblock The European Physical Journal B - Condensed Matter and Complex
  Systems. 2002;30(3):285--288.

\bibitem{onnela2003}
Onnela JP, Chakraborti A, Kaski K, Kert\'esz J, Kanto A.
\newblock Dynamics of market correlations: Taxonomy and portfolio analysis.
\newblock Physical Review E. 2003 Nov;68:056110.
\newblock Available from:
  \url{https://link.aps.org/doi/10.1103/PhysRevE.68.056110}.

\bibitem{li2020}
Li K, Zhang S, Song X, Weyrich A, Wang Y, Liu X, et~al.
\newblock Genome evolution of blind subterranean mole rats: Adaptive peripatric
  versus sympatric speciation.
\newblock Proceedings of the National Academy of Sciences.
  2020;117(51):32499--32508.
\newblock Available from: \url{https://www.pnas.org/content/117/51/32499}.

\bibitem{steinbrenner2020}
Steinbrenner AD, Mu{\~n}oz-Amatria{\'\i}n M, Chaparro AF, Aguilar-Venegas JM,
  Lo S, Okuda S, et~al.
\newblock A receptor-like protein mediates plant immune responses to
  herbivore-associated molecular patterns.
\newblock Proceedings of the National Academy of Sciences.
  2020;117(49):31510--31518.
\newblock Available from: \url{https://www.pnas.org/content/117/49/31510}.

\bibitem{manning2020}
Manning CD, Clark K, Hewitt J, Khandelwal U, Levy O.
\newblock Emergent linguistic structure in artificial neural networks trained
  by self-supervision.
\newblock Proceedings of the National Academy of Sciences.
  2020;117(48):30046--30054.
\newblock Available from: \url{https://www.pnas.org/content/117/48/30046}.

\bibitem{saul2020}
Saul LK.
\newblock A tractable latent variable model for nonlinear dimensionality
  reduction.
\newblock Proceedings of the National Academy of Sciences.
  2020;117(27):15403--15408.
\newblock Available from: \url{https://www.pnas.org/content/117/27/15403}.

\bibitem{matsumura2020}
Matsumura H, Hsiao MC, Lin YP, Toyoda A, Taniai N, Tarora K, et~al.
\newblock Long-read bitter gourd (Momordica charantia) genome and the genomic
  architecture of nonclassic domestication.
\newblock Proceedings of the National Academy of Sciences.
  2020;117(25):14543--14551.
\newblock Available from: \url{https://www.pnas.org/content/117/25/14543}.

\bibitem{hahn2020}
Hahn M, Jurafsky D, Futrell R.
\newblock Universals of word order reflect optimization of grammars for
  efficient communication.
\newblock Proceedings of the National Academy of Sciences.
  2020;117(5):2347--2353.
\newblock Available from: \url{https://www.pnas.org/content/117/5/2347}.

\bibitem{bertsimas1990}
Bertsimas DJ.
\newblock The probabilistic minimum spanning tree problem.
\newblock Networks. 1990;20:245--275.

\bibitem{goemans2006}
Goemans MX, Vondr\'{a}k J.
\newblock Covering minimum spanning trees of random subgraphs.
\newblock Random Structures \& Algorithms. 2006;29(3):257--276.
\newblock Available from:
  \url{https://onlinelibrary.wiley.com/doi/abs/10.1002/rsa.20115}.

\bibitem{torkestani2012}
Torkestani JA, Meybodi MR.
\newblock A learning automata-based heuristic algorithm for solving the minimum
  spanning tree problem in stochastic graphs.
\newblock The Journal of Supercomputing. 2012;59:1035–1054.

\bibitem{raphael2019}
(https://cs~stackexchange com/users/98/raphael) R. Do the minimum spanning
  trees of a weighted graph have the same number of edges with a given
  weight?;.
\newblock URL:https://cs.stackexchange.com/q/2211 (version: 2019-05-21).
\newblock Computer Science Stack Exchange.
\newblock Available from: \url{https://cs.stackexchange.com/q/2211}.

\bibitem{campbell2017}
Campbell EM, Jia H, Shankar A, Hanson D, Luo W, Masciotra S, et~al.
\newblock Detailed Transmission Network Analysis of a Large Opiate-Driven
  Outbreak of HIV Infection in the United States.
\newblock The Journal of Infectious Diseases. 2017 10;216(9):1053--1062.
\newblock Available from: \url{https://doi.org/10.1093/infdis/jix307}.

\bibitem{spada2004}
Spada E, Sagliocca L, Sourdis J, Garbuglia AR, Poggi V, De~Fusco C, et~al.
\newblock Use of the Minimum Spanning Tree Model for Molecular Epidemiological
  Investigation of a Nosocomial Outbreak of Hepatitis C Virus Infection.
\newblock Journal of Clinical Microbiology. 2004;42(9):4230--4236.
\newblock Available from: \url{https://jcm.asm.org/content/42/9/4230}.

\bibitem{le2013}
Le T, Wright EJ, Smith DM, He W, Catano G, Okulicz JF, et~al.
\newblock Enhanced CD4+ T-Cell Recovery with Earlier HIV-1 Antiretroviral
  Therapy.
\newblock New England Journal of Medicine. 2013;368(3):218--230.
\newblock PMID: 23323898.
\newblock Available from: \url{https://doi.org/10.1056/NEJMoa1110187}.

\bibitem{morris2010}
Morris SR, Little SJ, Cunningham T, Garfein RS, Richman DD, Smith DM.
\newblock Evaluation of an HIV Nucleic Acid Testing Program With Automated
  Internet and Voicemail Systems to Deliver Results.
\newblock Annals of Internal Medicine. 2010;152(12):778--785.
\newblock PMID: 20547906.
\newblock Available from:
  \url{https://www.acpjournals.org/doi/abs/10.7326/0003-4819-152-12-201006150-00005}.

\bibitem{kosakovsky2018}
Kosakovsky~Pond SL, Weaver S, Leigh~Brown AJ, Wertheim JO.
\newblock {HIV-TRACE (TRAnsmission Cluster Engine): a Tool for Large Scale
  Molecular Epidemiology of HIV-1 and Other Rapidly Evolving Pathogens}.
\newblock Molecular Biology and Evolution. 2018 01;35(7):1812--1819.
\newblock Available from: \url{https://doi.org/10.1093/molbev/msy016}.

\bibitem{little2014}
Little SJ, Kosakovsky~Pond SL, Anderson CM, Young JA, Wertheim JO, Mehta SR,
  et~al.
\newblock Using HIV Networks to Inform Real Time Prevention Interventions.
\newblock PLOS ONE. 2014 06;9(6):1--8.
\newblock Available from: \url{https://doi.org/10.1371/journal.pone.0098443}.

\bibitem{tamura1993}
Tamura K, Nei M.
\newblock Estimation of the number of nucleotide substitutions in the control
  region of mitochondrial DNA in humans and chimpanzees.
\newblock Molecular Biology and Evolution. 1993 05;10(3):512--526.
\newblock Available from:
  \url{https://doi.org/10.1093/oxfordjournals.molbev.a040023}.

\bibitem{csardi2006}
Csardi G, Nepusz T.
\newblock The igraph software package for complex network research.
\newblock InterJournal. 2006;Complex Systems:1695.
\newblock Available from: \url{https://igraph.org}.

\bibitem{stumpf2005}
Stumpf MPH, Wiuf C, May RM.
\newblock Subnets of scale-free networks are not scale-free: Sampling
  properties of networks.
\newblock Proceedings of the National Academy of Sciences.
  2005;102(12):4221--4224.
\newblock Available from: \url{https://www.pnas.org/content/102/12/4221}.

\end{thebibliography}

\end{document}